\newcommand{\LandauO}{\mathcal{O}}
\newcommand{\Ai}{\text{\normalfont Ai}}
\def\qed{{\quad\rule{1mm}{3mm}\,}}
\begin{document}

\newtheorem{thm}{Theorem}[section]
\newtheorem{cor}[thm]{Corollary}
\newtheorem{lmm}[thm]{Lemma}
\newtheorem{conj}[thm]{Conjecture}
\newtheorem{pro}[thm]{Proposition}
\newtheorem{df}[thm]{Definition}
\theoremstyle{remark}
\newtheorem{rem}[thm]{Remark}

\newcommand*\samethanks[1][\value{footnote}]{\footnotemark[#1]}
\newcommand*{\myand}{\,\and\,}

\title{\textbf{Enumeration of $d$-combining \\ Tree-Child Networks}}
\author{Yu-Sheng Chang\thanks{Department of Mathematical Sciences, National Chengchi University, Taipei 116, Taiwan.}
\myand
Michael Fuchs\samethanks
\myand
Hexuan Liu\samethanks
\myand
Michael Wallner\thanks{Institute for Discrete Mathematics and Geometry, TU Wien, 1040 Vienna, Austria.}
\myand
Guan-Ru Yu\thanks{Department of Mathematics, National Kaohsiung Normal University, Kaohsiung 824, Taiwan.}}
\date{\today}
 \maketitle

\begin{abstract}
Tree-child networks are one of the most prominent network classes for modeling evolutionary processes which contain reticulation events. Several recent studies have addressed counting questions for {\it bicombining tree-child networks} which are tree-child networks with every reticulation node having exactly two parents. In this paper, we extend these studies to {\it $d$-combining tree-child networks} where every reticulation node has now $d\geq 2$ parents. Moreover, we also give results and conjectures on the distributional behavior of the number of reticulation nodes of a network which is drawn uniformly at random from the set of all tree-child networks with the same number of leaves.
\end{abstract}

\section{Introduction and Results}

The evolutionary process of, e.g., chromosomes, species, and populations is not always tree-like due to the occurrence of reticulation events caused by meiotic recombination on the chromosome level,
specification and horizontal gene transfer on the species level,
and sexual recombination on the population level. Because of this, {\it phylogenetic networks} have been introduced as appropriate models for reticulate evolution. Studying the properties of these networks is at the moment one of the most active areas of research in phylogenetics; see \cite{HuRuSc,St}.

While algorithmic and combinatorial aspects of phylogenetic networks have been investigated now for a few decades, enumerating and counting phylogenetic networks as well as understanding their ``typical shape'' are relatively recent areas of research; see~\cite[page~253]{St} where such questions are only discussed in one short paragraph. However, the last couple of years have seen a lot of progress on these questions, in particular for the class of  {\it tree-child networks}, which is one of the most prominent subclasses amongst the many subclasses of phylogenetic networks; see \cite{CaZh,DiSeWe,FuGiMa1,FuGiMa2,FuHuYu,FuLiYu,FuYuZh,PoBa}.

Most of the studies on tree-child networks have focused on {\it bicombining tree-child networks} which are tree-child networks where every reticulation event involves exactly two individuals.
The purpose of this extended abstract is to discuss extensions of previous results to {\it multicombining tree-child} networks. More precisely, we will focus on {\it $d$-combining tree-child networks} which are tree-child networks whose reticulation events involve exactly $d\geq 2$ individuals. We will highlight similarities and differences between the two cases $d=2$ and $d>2$.

Before explaining our results, we will first give precise definitions and fix some notation. We start with the definition of phylogenetic networks.

\begin{figure}
\centering
\includegraphics[scale=0.9]{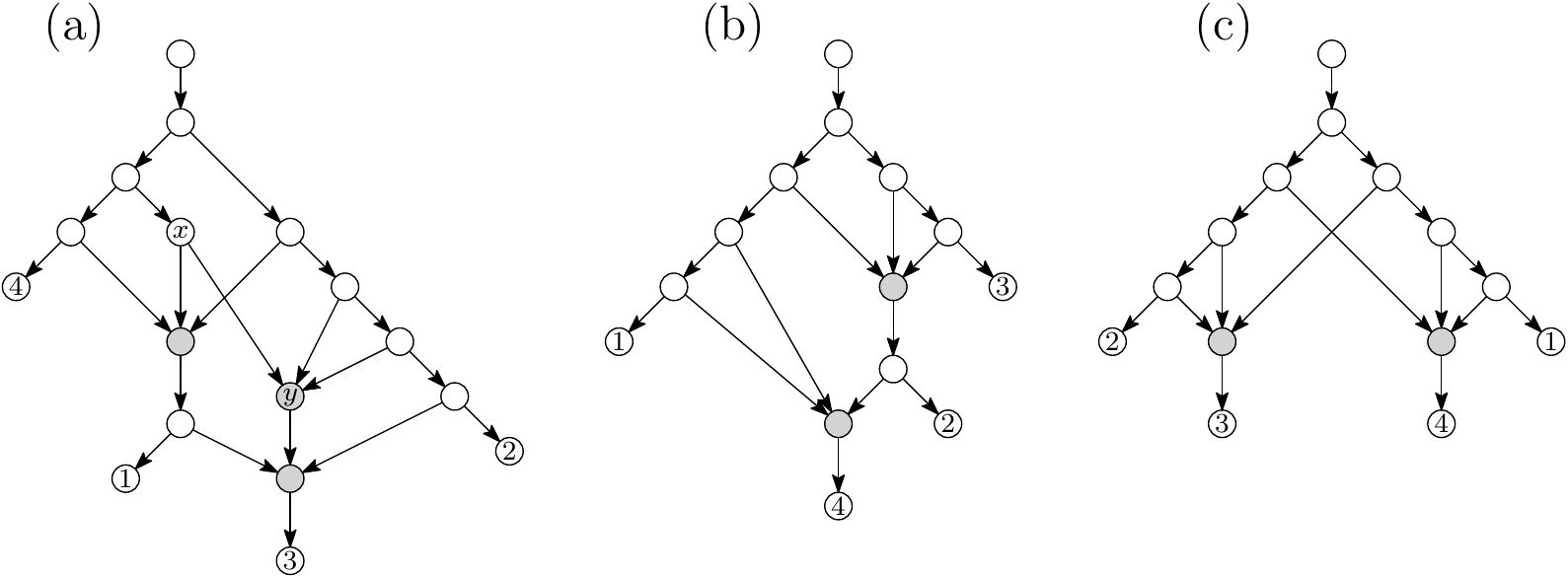}
\caption{(a) A $3$-combining phylogenetic network which is \emph{not} a tree-child network (because both children of the tree node~$x$ are reticulation nodes and the only child of the reticulation node~$y$ is also a reticulation node); (b) a $3$-combining tree-child network; (c) a $3$-combining one-component tree-child network.}\label{pn-fig}
\end{figure}

\begin{df}[Phylogenetic network]
A (rooted) phylogenetic network with $n$ leaves is a rooted, simple, directed acyclic graph (DAG) with no nodes of in- and outdegree~$1$ and exactly $n$ nodes of indegree $1$ and outdegree $0$ (i.e., leaves) which are bijectively labeled with labels from the set $\{1,\ldots,n\}$.
\end{df}

In this work, we will only consider phylogenetic networks whose nodes all have either indegree $1$ {\it or} outdegree $1$ and whose internal nodes (i.e., nodes which are neither leaves nor the root) with indegree $1$ all have outdegree $2$ (bifurcating case); the latter nodes will be called {\it tree nodes}. Finally, we will assume that all internal nodes with outdegree $1$ have indegree $d\geq 2$ and these nodes will be called {\it reticulation nodes}; see Figure~\ref{pn-fig} for examples with $d=3$. Note that $d=2$ is the above mentioned bicombining case.

We next recall the definition of tree-child networks.
\begin{df}[Tree-child network]
A phylogenetic network is called a tree-child network if every non-leaf node has at least one child which is not a reticulation node.
\end{df}

In other words, a phylogenetic network is a tree-child network if (a) the root is not followed by a reticulation node; (b) a reticulation node is not followed by another reticulation node; and (c) a tree node has at least one child which is not a reticulation node; see Figure~\ref{pn-fig}, (b) for an example.
A simple and important subclass of tree-child networks is the class of one-component tree-child networks; see the definition below and  Figure~\ref{pn-fig}, (c) for an example.

\begin{df}[One-component tree-child network]
A tree-child network is called a one-component tree-child network if every reticulation node is directly followed by a leaf.
\end{df}

One-component networks are more ``tree-like'' than general tree-child networks. Moreover, they constitute an important building block in the construction of general tree-child networks; see \cite{CaZh} for the bicombining case.

In the following, we denote by $\mathrm{OTC}^{(d)}_{n,k}$ and $\mathrm{TC}^{(d)}_{n,k}$ the number of one-component and general $d$-combining tree-child networks with $n$ leaves and $k$ reticulation nodes, respectively.
Note that the tree-child property implies that $k\leq n-1$. Thus, the total numbers of one-component and general $d$-combining tree-child networks, denoted by $\mathrm{OTC}^{(d)}_n$ and $\mathrm{TC}^{(d)}_n$, satisfy
\[
\mathrm{OTC}^{(d)}_n:=\sum_{k=0}^{n-1}\mathrm{OTC}^{(d)}_{n,k}\qquad\text{and}\qquad\mathrm{TC}^{(d)}_n:=\sum_{k=0}^{n-1}\mathrm{TC}^{(d)}_{n,k}.
\]

Now, we are ready to present our results. First, for one-component tree-child networks, we have the following formula which extends the one for $d=2$ from Theorem 13 in \cite{CaZh}.

\begin{thm}\label{formula-OTC}
The numbers of one-component $d$-combining tree-child networks with $n$~leaves and $k$~reticulation nodes for $0 \leq k \leq n-1$ are given by
\[
\mathrm{OTC}^{(d)}_{n,k}=\binom{n}{k}\frac{(2n+(d-2)k-2)!}{(d!)^k \, 2^{n-k-1} \,(n-k-1)!}
\]
and equal to $0$ otherwise.
\end{thm}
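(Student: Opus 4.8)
The plan is to establish a bijective decomposition of one-component $d$-combining tree-child networks into a base phylogenetic tree together with sequentially attached reticulation edges, and then to read off the formula as a telescoping product. Throughout I write $m=n-k$ for the number of tree leaves.

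First I would peel off the factor $\binom{n}{k}$. Since every reticulation node is the parent of exactly one leaf, precisely $k$ of the $n$ labels sit below reticulation nodes; choosing this $k$-subset of labels contributes $\binom{n}{k}$, and by relabeling symmetry the number of remaining structures is independent of the chosen subset. It then suffices to show that the number of one-component networks with a fixed set of $m$ labelled tree leaves and $k$ labelled reticulation leaves equals $N_{m,k}=\frac{(2m+dk-2)!}{(d!)^k\,2^{m-1}\,(m-1)!}$, since substituting $m=n-k$ turns $2m+dk-2$ into $2n+(d-2)k-2$ and reproduces the claimed expression.

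Next I would isolate the tree component. Deleting every reticulation node together with its pendant leaf leaves only tree nodes and tree leaves; each reticulation edge emanated from a distinct tree node (the tree-child condition forces at most one reticulation child per tree node), so exactly $dk$ tree nodes drop to out-degree~$1$. Suppressing these yields a planted rooted binary phylogenetic tree on the $m$ tree leaves, of which there are $(2m-3)!!=\frac{(2m-2)!}{2^{m-1}(m-1)!}$, and the suppressed vertices are precisely the subdivision points that record where reticulation edges attach. I would then reconstruct the network by re-inserting the $dk$ reticulation-edge tails one at a time, each time subdividing a current tree edge and directing the new tail into a reticulation node. Because reticulation nodes have no descendants other than their pendant leaf, acyclicity and the tree-child property are preserved automatically, so every current tree edge is a legal attachment site; starting from the $2m-1$ edges of the planted base tree and noting that each insertion increases the number of tree edges by one, the $j$-th tail admits $2m+j-2$ choices, giving $\prod_{j=1}^{dk}(2m+j-2)=\frac{(2m+dk-2)!}{(2m-2)!}$ ordered insertion sequences.

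Finally I would account for the symmetry factor. Grouping the $dk$ tails into $k$ reticulation nodes of $d$ tails each (with pendant labels assigned in sorted order, which introduces no cross-node overcounting), I claim each labelled network arises from exactly $(d!)^k$ of these ordered sequences, since permuting the insertion order of the $d$ parents feeding a single reticulation node yields isomorphic networks. Dividing by $(d!)^k$ and multiplying by the number $(2m-3)!!$ of base trees then produces $N_{m,k}$ after cancelling $(2m-2)!$. I expect the main obstacle to be making this last step fully rigorous: one must verify that the $(d!)^k$ count is exact even when several parents of the same reticulation node subdivide the same base edge — the case where insertion order genuinely alters the vertical arrangement — and confirm that distinct unordered attachment data always yield non-isomorphic networks, so that the division by $(d!)^k$ neither over- nor under-counts. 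The degenerate small cases (such as $m=1$, i.e.\ $k=n-1$) should be checked directly against the formula.
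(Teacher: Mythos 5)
Your argument is correct, and its combinatorial core coincides with the paper's: both proofs reduce the count to the number $(2m-3)!!$ of phylogenetic trees on the $m=n-k$ tree leaves by inserting reticulation tails into edges not incident to reticulation nodes, and both quotient out the symmetry among the $d$ parents of each reticulation node (the paper via the multiset coefficient $\binom{2n+(d-2)k-2}{d}$, you via an explicit division by $(d!)^k$). The difference is organizational: the paper peels off one reticulation node at a time, obtaining the recurrence $\mathrm{OTC}^{(d)}_{n,k}=\frac{n}{k}\binom{2n+(d-2)k-2}{d}\mathrm{OTC}^{(d)}_{n-1,k-1}$ --- where the $1/k$ accounts for which reticulation node was added last --- and iterates, whereas you strip all $k$ reticulation nodes at once and count ordered reinsertions of all $dk$ tails in a single telescoping product. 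Your route gives a one-shot bijective derivation but shifts the overcounting analysis to a single global symmetry factor, which is exactly the point you flag; the paper's recursion localizes that verification to one insertion step. The gap you identify does close cleanly: the $d$ attachment points of a fixed reticulation node occupy $d$ distinct positions in the linear orders along the subdivided base-tree edges, so permuting which of the $d$ interchangeable parent slots occupies which position yields exactly $d!$ ordered configurations producing the same network, even when several of those positions lie on the same base edge; conversely, two configurations that differ as unordered assignments of positions to (leaf-labelled) reticulation nodes produce networks that differ as labelled graphs, since any isomorphism must fix the labelled leaves and hence the entire subdivided base tree. Together with the check that every such configuration satisfies the tree-child and one-component conditions (which you carry out correctly: every new tree node has the lower segment of a base-tree edge as a non-reticulation child, and tails never subdivide edges incident to reticulation nodes), and the convention $(-1)!!=1$ for the case $m=1$, your proof is complete.
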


Using this formula, we obtain the following consequence.
\begin{cor}\label{Cor-1}
Let $R^{(d)}_n$ be the number of reticulation nodes of a one-component $d$-combining tree-child network picked uniformly at random from the set of all one-component $d$-combining tree-child networks with $n$ leaves. Then, we have the following limit behavior of $R^{(d)}_n$.
\begin{itemize}
\item[(i)] For $d=2$ (bicombining case), we have the weak convergence result:
\[
\frac{R^{(2)}_n-n+\sqrt{n}}{\sqrt[4]{n/4}}\stackrel{w}{\longrightarrow}N(0,1).
\]
\item[(ii)] For $d=3$, we have
\[
n-1-R^{(3)}_n\stackrel{w}{\longrightarrow}\operatorname{Bessel}(1,2),
\]
where Bessel$(v,a)$ denotes the Bessel distribution, i.e.,
\[
\mathbb{P}(\operatorname{Bessel}(1,2)=k)=\frac{1}{I_1(2)k!(k+1)!},\qquad (k\geq 0).
\]
Here, $I_v(a)=\left(\frac{a}{2}\right)^v\sum_{k=0}^{\infty}\frac{1}{k!\Gamma(k+v+1)}\frac{a^{2k}}{4^k}$ is the modified Bessel function of the first kind.
\item[(iii)] For $d\geq 4$, the limit law of $n-1-R_n^{(d)}$ is degenerate, i.e.,
\[
n-1-R^{(d)}_n\stackrel{w}{\longrightarrow} 0.
\]
\end{itemize}
\end{cor}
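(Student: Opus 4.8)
The plan is to work directly with the exact probabilities
\[
\mathbb{P}(R_n^{(d)}=k)=\frac{\mathrm{OTC}^{(d)}_{n,k}}{\mathrm{OTC}^{(d)}_n},
\]
inserting the closed form of Theorem~\ref{formula-OTC} and letting the asymptotics decide where the mass concentrates. Since all three limit laws are phrased through the ``defect'' $n-1-R_n^{(d)}$, I would first reindex by $j:=n-1-k$ and set $b_{n,j}:=\mathrm{OTC}^{(d)}_{n,n-1-j}$. A short simplification gives
\[
b_{n,j}=\binom{n}{j+1}\frac{\bigl(d(n-1)-(d-2)j\bigr)!}{(d!)^{\,n-1-j}\,2^{\,j}\,j!},
\]
and the decisive quantity is the ratio of consecutive terms,
\[
\frac{b_{n,j+1}}{b_{n,j}}=\frac{(n-j-1)\,d!}{2(j+1)(j+2)\prod_{i=0}^{d-3}\bigl(N_j-i\bigr)},\qquad N_j:=d(n-1)-(d-2)j.
\]
For fixed $j$ this ratio behaves like $\frac{d!}{2\,d^{\,d-2}(j+1)(j+2)}\,n^{\,3-d}$, so the sign of $3-d$ produces the whole trichotomy: the ratio blows up for $d=2$, tends to $\frac{1}{(j+1)(j+2)}$ for $d=3$, and tends to $0$ for $d\ge 4$.

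For parts (ii) and (iii) I would exploit that the mass now sits near $j=0$. Using $N_j\ge 2(n-1)$ for all $0\le j\le n-1$ one obtains the uniform bound $\frac{b_{n,j+1}}{b_{n,j}}\le \frac{c_d}{n^{\,d-3}(j+1)(j+2)}$ for a constant $c_d$ (with $c_3=1$, since $3(n-j-1)\le 3n-3-j$). Hence $b_{n,j}/b_{n,0}$ is dominated by the summable sequence $\prod_{i=0}^{j-1}\frac{c_d}{n^{\,d-3}(i+1)(i+2)}$. For $d=3$ the pointwise limit $b_{n,j}/b_{n,0}\to \frac{1}{j!(j+1)!}$ together with dominated convergence yields
\[
\mathbb{P}(n-1-R_n^{(3)}=j)=\frac{b_{n,j}/b_{n,0}}{\sum_{i\ge0}b_{n,i}/b_{n,0}}\longrightarrow\frac{1/(j!(j+1)!)}{\sum_{i\ge0}1/(i!(i+1)!)},
\]
and recognizing the normalizing sum as $I_1(2)$ gives exactly the $\operatorname{Bessel}(1,2)$ law. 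For $d\ge 4$ the same bound forces $\sum_{j\ge1}b_{n,j}/b_{n,0}=\LandauO(n^{3-d})\to0$, so $\mathbb{P}(n-1-R_n^{(d)}=0)\to1$, i.e.\ the degenerate limit.

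Part (i) is genuinely different and, I expect, the main obstacle. Here the $d=2$ formula collapses to $\mathrm{OTC}^{(2)}_{n,k}\propto \frac{1}{k!(n-k)!(n-k-1)!}$, so with $m=n-k$ the law of $m$ is proportional to $c(m):=\frac{1}{(n-m)!\,m!\,(m-1)!}$. Solving $c(m+1)/c(m)=1$ locates the mode at $m\sim\sqrt n$, and a second-order Stirling expansion gives $\log c(m)=\mathrm{const}-\frac{(m-\sqrt n)^2}{\sqrt n}+o(1)$ uniformly for $m=\sqrt n+\LandauO(n^{1/4})$; this is a discrete Gaussian of variance $\sigma_n^2=\sqrt n/2$, i.e.\ standard deviation $\sqrt[4]{n/4}$. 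Converting this local approximation into weak convergence of $\frac{R_n^{(2)}-n+\sqrt n}{\sqrt[4]{n/4}}=\frac{\sqrt n-m}{\sqrt[4]{n/4}}$ to $N(0,1)$ is then a standard local-limit-theorem argument. The delicate points are to make the Stirling expansion uniform over the narrow window of width $n^{1/4}$ around $\sqrt n$ (while checking that the $\LandauO(1)$ shift between mode and centering is harmless), and to supply tail estimates showing negligible mass outside this window; both are routine but require care precisely because the fluctuation scale $n^{1/4}$ is so much smaller than the centering $\sqrt n$.
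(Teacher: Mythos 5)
Your proposal is correct and follows essentially the same route as the paper: both start from the exact formula of Theorem~\ref{formula-OTC}, substitute $k\mapsto n-1-k$, and extract the trichotomy from the asymptotics of the reindexed terms (your consecutive-ratio bound plus dominated convergence is just a slightly more self-contained justification of the paper's ``standard application of the Laplace method''). For part (i) the paper simply cites \cite{FuYuZh}, whereas you sketch the underlying local limit argument around the mode $m\sim\sqrt{n}$; your identification of the variance $\sqrt{n}/2$ and of the remaining technical points matches what is done there.
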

\begin{rem}
If $t$ denotes the number of tree nodes and $N$ the total number of nodes, then by the handshaking lemma, we have
\begin{equation}\label{tree-nodes}
t=n+(d-1)k-1\qquad\text{and}\qquad N=2n+dk.
\end{equation}
Therefore, we have similar limit distribution results for these numbers as well.
\end{rem}

Note that the above result for $d=2$ is already contained in the proof of Theorem 3 in \cite{FuYuZh} where even a local limit theorem was proved; see also \cite{FuLiYu}.
Using the above corollary, we now obtain the first order asymptotics of the total number of one-component tree-child networks.

\begin{cor}\label{Cor-2}
\begin{itemize}
\item[(i)] For $d=2$ (bicombining case), we have
\[
\mathrm{OTC}^{(2)}_n\sim \frac{1}{4 \pi \sqrt{ e}} (n!)^2 2^{n} e^{2\sqrt{n}} n^{-9/4}.
\]


\item[(ii)] For $d=3$, we have
\[
\mathrm{OTC}^{(3)}_n\sim I_1(2)\cdot\mathrm{OTC}^{(3)}_{n,n-1}\sim\frac{I_1(2)\sqrt{3}}{9\pi}(n!)^3\left(\frac{9}{2}\right)^nn^{-3},
\]
where $I_{v}(\alpha)$ is as in Corollary~\ref{Cor-1}, (ii).
\item[(iii)] For $d\geq 4$, we have
\[
\mathrm{OTC}^{(d)}_n\sim\mathrm{OTC}^{(d)}_{n,n-1}\sim\frac{d!}{d^{d-1/2}(2\pi)^{(d-1)/2}}(n!)^{d}\left(\frac{d^d}{d!}\right)^nn^{3(1-d)/2}.
\]
\end{itemize}
\end{cor}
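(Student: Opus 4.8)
The plan is to work directly from the exact formula for $\mathrm{OTC}^{(d)}_{n,k}$ in Theorem~\ref{formula-OTC} and to exploit the fact, encoded in Corollary~\ref{Cor-1}, that the three regimes differ precisely in \emph{where} the sum $\mathrm{OTC}^{(d)}_n=\sum_k\mathrm{OTC}^{(d)}_{n,k}$ concentrates. Writing $a_{n,k}:=\mathrm{OTC}^{(d)}_{n,k}$ and setting $k=n-1-j$, a short computation gives
\[
a_{n,n-1}=n\,\frac{(d(n-1))!}{(d!)^{n-1}},\qquad
\frac{a_{n,n-1-j}}{a_{n,n-1}}=\frac{\binom{n}{j+1}}{n}\cdot\frac{(d(n-1)-(d-2)j)!}{(d(n-1))!}\cdot\frac{(d!)^j}{2^j\,j!}.
\]
Using $\binom{n}{j+1}/n\sim n^j/(j+1)!$ together with $(d(n-1)-(d-2)j)!/(d(n-1))!\sim (dn)^{-(d-2)j}$, one finds
\[
\frac{a_{n,n-1-j}}{a_{n,n-1}}\sim\frac{(d!)^j}{(j+1)!\,j!\,2^j\,d^{(d-2)j}}\,n^{(3-d)j}.
\]
The exponent $(3-d)j$ is the source of the trichotomy, and the first step is to make this estimate uniform enough in $j$ (via a crude factorial bound) to license dominated convergence in the cases $d\ge 3$.

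For $d\ge 4$ the exponent is negative, so every term with $j\ge 1$ tends to $0$; hence $\mathrm{OTC}^{(d)}_n/a_{n,n-1}\to 1$ (equivalently, this is the statement $\mathbb{P}(R^{(d)}_n=n-1)\to 1$ from Corollary~\ref{Cor-1}(iii)), and I would then evaluate $a_{n,n-1}=n(d(n-1))!/(d!)^{n-1}$ by Stirling. Using $(d(n-1))!\sim (dn)!/(dn)^d$ and the multinomial asymptotics $(dn)!/(n!)^d\sim \sqrt{d}\,(2\pi n)^{-(d-1)/2}d^{dn}$ yields the stated constant $d!/(d^{d-1/2}(2\pi)^{(d-1)/2})$ and the power $n^{3(1-d)/2}$. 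For $d=3$ the exponent vanishes, and the ratio converges termwise to $1/(j!(j+1)!)$; summing (again by dominated convergence) gives $\mathrm{OTC}^{(3)}_n/a_{n,n-1}\to\sum_{j\ge0}1/(j!(j+1)!)=I_1(2)$, after which the same Stirling computation with $d=3$ produces $\mathrm{OTC}^{(3)}_{n,n-1}\sim (\sqrt{3}/(9\pi))(n!)^3(9/2)^n n^{-3}$ and hence the claim.

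The case $d=2$ is qualitatively different and will be the main obstacle. Here $(d-2)k=0$, so the formula collapses to
\[
\mathrm{OTC}^{(2)}_n=\frac{(2n-2)!\,n!}{2^{n-1}}\sum_{j=1}^{n}\frac{1}{(n-j)!\,j!\,(j-1)!},
\]
and, in line with Corollary~\ref{Cor-1}(i), the summand peaks not at the boundary but around $j\approx\sqrt{n}$, spreading over a window of width $\sim n^{1/4}$; thus no single term dominates and a Laplace/saddle-point evaluation is required. The plan is to write $n!/(n-j)!=n^j\prod_{i<j}(1-i/n)=n^j\exp(-j^2/(2n)+o(1))$, which reduces the sum to $\tfrac1{n!}\sum_j \tfrac{n^j}{j!(j-1)!}e^{-j^2/(2n)}$, i.e.\ essentially the series for $\sqrt{n}\,I_1(2\sqrt n)$ weighted by a slowly varying Gaussian factor.

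The delicate point is that the correction $e^{-j^2/(2n)}$ is \emph{not} negligible: evaluated at the saddle $j\approx\sqrt n$ it contributes exactly the factor $e^{-1/2}=1/\sqrt e$ appearing in the constant. Carrying out the saddle-point estimate (saddle at $j_0\sim\sqrt n$, Gaussian width $n^{1/4}/\sqrt2$) gives $\sum_j\sim (e^{2\sqrt n}/(2\sqrt\pi))\,e^{-1/2}\,n^{1/4}$, and combining this with $(2n-2)!\sim 4^n(n!)^2/(4n^{5/2}\sqrt\pi)$ yields $\mathrm{OTC}^{(2)}_n\sim (4\pi\sqrt e)^{-1}(n!)^2 2^n e^{2\sqrt n}n^{-9/4}$. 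The bookkeeping of constants through the combined Bessel/Laplace step, together with the uniform domination needed to justify the interchange of limit and sum for $d\ge3$, is where care is most needed; the underlying estimates themselves are routine Stirling-type asymptotics.
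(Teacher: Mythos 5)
Your proposal is correct, and for parts (ii) and (iii) it follows essentially the same route as the paper: substitute $k=n-1-j$ into the exact formula of Theorem~\ref{formula-OTC}, expand the ratio $\mathrm{OTC}^{(d)}_{n,n-1-j}/\mathrm{OTC}^{(d)}_{n,n-1}$, observe that its $n$-dependence is governed by the factor $n^{(3-d)j}$ (so the limit of the normalized sum is $\sum_j 1/(j!(j+1)!)=I_1(2)$ for $d=3$ and $1$ for $d\geq 4$), justify the interchange of limit and sum, and finish by applying Stirling to $\mathrm{OTC}^{(d)}_{n,n-1}=n\,(d(n-1))!/(d!)^{n-1}$; the paper packages the same steps as a uniform expansion for $j=o(\sqrt n)$ followed by ``a standard application of the Laplace method,'' and your constants and exponents all check out. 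The one place you genuinely diverge is part (i): the paper gives no proof there, simply citing \cite{FuYuZh}, whereas you carry out the $d=2$ argument directly, correctly locating the mass of the sum at $j\approx\sqrt n$ in a window of width $\Theta(n^{1/4})$, over which the correction $e^{-j^2/(2n)}$ is asymptotically the constant $e^{-1/2}$ that produces the $\sqrt e$ in the denominator; the resulting constant $\tfrac{1}{4\pi\sqrt e}$ and power $n^{-9/4}$ agree with the statement. So your write-up is more self-contained than the paper's for $d=2$, with the remaining work being the routine uniformity/domination estimates you already flag.
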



Again, the result for the case $d=2$ is already contained in \cite{FuYuZh}.
It is also the only case of the three above in which we find a stretched exponential in the asymptotics; see~\cite{ElFaWa}.

We next turn to general tree-child networks. Here, in contrast to one-component tree-child networks, we only understand the behavior of $\mathrm{TC}^{(d)}_{n,k}$ for fixed $k$ and for $k=n-1$ (i.e., for maximally reticulated networks).

First, for fixed $k$, the first order asymptotics in the bicombining case ($d=2$) was derived in \cite{FuGiMa1,FuGiMa2} and with a different method in \cite{FuHuYu}; see also \cite{PoBa} where the result was re-derived with yet another method which is however based on a (yet) unproven conjecture. The approach from \cite{FuHuYu} can also be used in the $d$-combining case leading to the following result.

\begin{thm}\label{gen-tc-fixed-k}
For the number of $d$-combining tree-child networks with $n$ leaves and $k$ reticulation nodes, we have for fixed $k$, as $n\rightarrow\infty$,
\[
\mathrm{TC}^{(d)}_{n,k}\sim\frac{2^{(4-d)k-1}}{d!^{k}\,k!\sqrt{\pi}}n!2^nn^{(4-d)k-3/2}.
\]
\end{thm}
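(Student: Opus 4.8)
The plan is to reduce the enumeration to the one-component case, for which Theorem~\ref{formula-OTC} already supplies an exact formula, following the building-block philosophy of \cite{CaZh} together with the counting strategy of \cite{FuHuYu}. The structural tool is a decomposition of a general $d$-combining tree-child network into a one-component ``core'' plus pendant trees. Since in any tree-child network the child of every reticulation node is a non-reticulation, the maximal subtree dangling below each reticulation is a rooted binary phylogenetic tree; contracting each such subtree to a single leaf turns the network into a one-component $d$-combining tree-child network with the same number $k$ of reticulations. First I would prove that this contraction is a label-respecting bijection between $d$-combining tree-child networks with $n$ leaves and $k$ reticulations and the set of triples consisting of a one-component core, a distribution of the $n$ labels among the tree-leaves of the core and the $k$ pendant positions, and the $k$ pendant trees themselves.

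This decomposition converts the count into a convolution. If the core carries $m$ tree-leaves and $k$ reticulation-leaves (so $m+k$ leaves in total) and the pendant tree at the $i$-th reticulation has $s_i\ge 1$ leaves, then $m+s_1+\cdots+s_k=n$ and one obtains an identity of the shape
\[
\mathrm{TC}^{(d)}_{n,k}=\sum_{\substack{m+s_1+\cdots+s_k=n\\ s_i\ge 1}} L(m,s_1,\ldots,s_k)\,\mathrm{OTC}^{(d)}_{m+k,k}\,\prod_{i=1}^{k}(2s_i-3)!!,
\]
where $(2s-3)!!$ is the number of rooted binary phylogenetic trees on $s$ labelled leaves (equal to $1$ for $s=1$) and $L$ collects the multinomial factors that distribute the $n$ labels. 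Substituting the exact value of $\mathrm{OTC}^{(d)}_{m+k,k}$ from Theorem~\ref{formula-OTC} makes every summand completely explicit.

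The asymptotic analysis then rests on a concentration argument. For fixed $k$ the factor $\mathrm{OTC}^{(d)}_{m+k,k}$ grows super-exponentially in $m$, and since spreading leaves over several phylogenetic trees is far costlier than keeping them in one large structure, the sum is dominated by configurations in which the pendant trees have bounded size, the contributions of large pendant trees being negligible. Expanding $\mathrm{OTC}^{(d)}_{m+k,k}$ by Stirling's formula (which in particular yields the first-order asymptotics of $\mathrm{OTC}^{(d)}_{n,k}$ for fixed $k$) and summing the bounded-size contributions, including the labelling factors $L$, produces an explicit constant. Collecting this constant together with the symmetry factors $d!^{-k}$ (the $d$ unordered parents of each reticulation) and $k!^{-1}$ (the $k$ unordered reticulations) and with the tree constant coming from $(2n-3)!!\sim\tfrac{1}{2\sqrt{\pi}}\,n!\,2^n n^{-3/2}$ yields the stated first-order asymptotics.

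I expect the main obstacle to be twofold. First, establishing the contraction map as a genuine bijection for reticulations of arbitrary indegree $d$ requires care: one must verify that the tree-child property is exactly preserved under contraction and reconstruction, that replacing a pendant leaf by a tree introduces no illegal reticulation-reticulation adjacency, and that the labelling factors $L$ are counted without over- or undercounting. Second, and more delicate, is the uniform control of the convolution: one has to bound the contribution of configurations with large pendant trees in order to justify the concentration, and then evaluate the limiting constant from the bounded part exactly rather than merely to leading order, since it is precisely this step that pins down the constant appearing in the statement.
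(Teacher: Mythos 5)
The paper does not actually prove Theorem~\ref{gen-tc-fixed-k} in this text: it only states that the proof generalizes the component-graph method of \cite{CaZh,FuHuYu} and defers all details to the journal version. Your plan is a simplified relative of that method, but its central structural claim fails. The ``maximal subtree dangling below a reticulation node'' is in general \emph{not} a pendant phylogenetic tree that can be contracted to a leaf: a tree node lying in the tree component below a reticulation $r_1$ may itself be one of the $d$ parents of another reticulation $r_2$, and contracting that component destroys the edge to $r_2$. Equivalently, the component graph of a general tree-child network is an arbitrary admissible DAG on the $k+1$ tree components, not a star, so your contraction map is not a bijection and your convolution identity undercounts $\mathrm{TC}^{(d)}_{n,k}$. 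A correct argument along your lines must sum over all component graphs and then prove that the star configurations (all reticulation parents in the root component) dominate for fixed $k$; this is plausible (each parent forced into a bounded non-root component costs a factor of order $n^{-1}$), but it is precisely the missing step that the cited component-graph machinery supplies.

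The second, more serious, gap is that you never carry out the asymptotic evaluation you describe, and carrying it out shows your route cannot terminate at the displayed formula. Theorem~\ref{formula-OTC} and Stirling's formula give, for fixed $k$,
\[
\mathrm{OTC}^{(d)}_{n,k}\sim\frac{2^{(d-1)k-1}}{d!^{k}\,k!\,\sqrt{\pi}}\,n!\,2^{n}\,n^{dk-3/2},
\]
and your pendant-tree convolution multiplies this by the convergent sum $\bigl(\sum_{s\geq 1}\tfrac{(2s-3)!!}{s!\,2^{s-1}}\bigr)^{k}=2^{k}$, yielding the exponent $n^{dk-3/2}$ and the constant $2^{dk-1}/(d!^{k}k!\sqrt{\pi})$. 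This coincides with the stated $n^{(4-d)k-3/2}$ only when $d=2$. For $d\geq 3$ and $k\geq 1$ the two exponents differ, and since $\mathrm{OTC}^{(d)}_{n,k}\leq\mathrm{TC}^{(d)}_{n,k}$, any argument that concentrates on a one-component core is forced to produce a quantity of order at least $n!\,2^{n}\,n^{dk-3/2}$, which is incompatible with the displayed right-hand side (the Appendix data for $d=3$, $k=1$ already show $\mathrm{TC}^{(3)}_{n,1}$ growing faster than $n!\,2^{n}\,n^{-1/2}$). Whether the discrepancy lies in the statement's exponent or elsewhere, your closing assertion that ``collecting the constants yields the stated first-order asymptotics'' is not verified and, as a derivation of the formula as printed, would fail; a complete proof must either resolve this mismatch or exhibit the computation explicitly.
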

\begin{rem}
Our approach can also be used to compute tables of $\mathrm{TC}^{(d)}_{n,k}$ for small values of $n$, $k$, and $d$; see the Appendix. Moreover, the approach is also capable of giving exact formulas for $\mathrm{TC}^{(d)}_{n,k}$ and small values of $k$; see \cite{CaZh,FuGiMa2} for formulas in the bicombining case.
\end{rem}

Note that the asymptotic order in the above theorem is much smaller than the one obtained for one-component tree-child networks. Thus, the majority of tree-child networks do not have a bounded number of reticulation nodes. In fact, the number of reticulation nodes of a ``typical'' tree-child network is close to the maximum $n-1$. More precisely, the following result holds.

\begin{thm}\label{gen-tc-max-k}
For the number of $d$-combining tree-child networks with $n$ leaves, we have
\[
\mathrm{TC}^{(d)}_{n}
=\Theta\left(\mathrm{TC}^{(d)}_{n,n-1}\right)
=\Theta\left((n!)^{d} \, \gamma(d)^n \, e^{3a_1\beta(d)n^{1/3}} n^{\alpha(d)} \right),
\]
where $a_1=-2.33810741\ldots$ is the largest root of the Airy function of the first kind and
\[
\alpha(d)=-\frac{d(3d-1)}{2(d+1)},
\qquad\beta(d)=\left(\frac{d-1}{d+1}\right)^{2/3},
\qquad\gamma(d)=4\frac{(d+1)^{d-1}}{(d-1)!}.
\]
\end{thm}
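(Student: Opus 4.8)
The plan is to prove the two $\Theta$-estimates separately: first that the total count $\mathrm{TC}^{(d)}_n$ has the same order as the count $\mathrm{TC}^{(d)}_{n,n-1}$ of maximally reticulated networks, and then that the latter obeys the stated asymptotic order. For the first reduction the lower bound $\mathrm{TC}^{(d)}_n\ge\mathrm{TC}^{(d)}_{n,n-1}$ is immediate, since the right-hand side is a single summand. For the matching upper bound I would control the ratios $\mathrm{TC}^{(d)}_{n,k}/\mathrm{TC}^{(d)}_{n,k+1}$ by a combinatorial double-counting argument: fix an operation that inserts one further reticulation into a network with $k$ reticulations (respecting the tree-child constraint) and estimate, on the one hand, the number of legal insertions and, on the other hand, how often each resulting network with $k+1$ reticulations is produced. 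Using only these one-sided multiplicity bounds (the exact values of $\mathrm{TC}^{(d)}_{n,k}$ being unavailable for general $k$) should give $\mathrm{TC}^{(d)}_{n,k}\le c_{n,k}\,\mathrm{TC}^{(d)}_{n,k+1}$ with the telescoped products summable, so that $\sum_{k}\mathrm{TC}^{(d)}_{n,k}\le C\,\mathrm{TC}^{(d)}_{n,n-1}$ for a constant $C$ independent of $n$.

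For the order of $\mathrm{TC}^{(d)}_{n,n-1}$ I would first exploit the rigidity of maximally reticulated networks to obtain an exact count. Here the numbers of tree and total nodes are forced, namely $t=d(n-1)$ and $N=(d+2)n-d$ by \eqref{tree-nodes}, and one can build such a network sequentially by processing its $n-1$ reticulations in a linear (temporal) order, each step creating one new leaf together with one reticulation whose $d$ parents attach to the tree part already built. Tracking the number of available attachment points at each step yields a recurrence, and hence an explicit sum, for $\mathrm{TC}^{(d)}_{n,n-1}$. After factoring out the global labeling contribution $(n!)^d$, I expect the combinatorial core to reduce to a sum $S_n=\sum_m b_{n,m}$ in which each $b_{n,m}$ is a product of binomial and factorial terms encoding the local weight $(d+1)^{d-1}/(d-1)!$ around each reticulation (the Cayley count of the $d+1$ nodes forming a reticulation with its parents).

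The heart of the matter is the asymptotic evaluation of $S_n$, which I would carry out by the saddle point method (equivalently, singularity analysis of the associated generating function). The crucial feature is that the saddle point equation degenerates to a higher-order contact, so that two saddles coalesce; rescaling the integrand in a transition window of width of order $n^{1/3}$ produces the Airy function $\Ai$, whose largest zero $a_1$ pins the dominant contribution. This is exactly the mechanism that outputs the stretched-exponential factor $e^{3a_1\beta(d)n^{1/3}}$, while the exponential rate $\gamma(d)$ comes from the location of the saddle and the polynomial factor $n^{\alpha(d)}$ from the Gaussian-times-Airy fluctuations; matching these against the explicit local weights yields the three constants. Applying the same estimate to the upper bound from the first step then closes the $\Theta$.

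The main obstacle I anticipate is precisely this Airy-type saddle point analysis: verifying rigorously that the saddles coalesce in the claimed $n^{1/3}$ scaling, controlling the tails away from the transition region, and tracking $\alpha(d),\beta(d),\gamma(d)$ through the rescaling — in particular justifying that it is $a_1$, the largest root of $\Ai$, rather than another zero, that governs the exponentially dominant term. A secondary difficulty lies in the reduction step, where the double-counting must be organized to require only comparison of consecutive $k$ and never a closed form for intermediate $\mathrm{TC}^{(d)}_{n,k}$.
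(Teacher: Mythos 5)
Your first reduction is essentially the paper's: the lower bound is trivial, and the upper bound is obtained there by exactly the kind of one-sided double counting you describe (inserting a new reticulation into one of the $2(n-k-1)$ \emph{free edges} of a network with $k$ reticulations gives $2(n-k-1)\mathrm{TC}^{(d)}_{n,k}\leq\mathrm{TC}^{(d)}_{n,k+1}$, which telescopes to $\mathrm{TC}^{(d)}_{n,k}\leq \mathrm{TC}^{(d)}_{n,n-1}/(2^{n-k-1}(n-k-1)!)$ and hence $\mathrm{TC}^{(d)}_n\leq\sqrt{e}\,\mathrm{TC}^{(d)}_{n,n-1}$). That part of your plan is sound.

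The second half contains a genuine gap. Your plan hinges on the expectation that the combinatorial core reduces to a sum $S_n=\sum_m b_{n,m}$ whose terms are \emph{explicit products} of binomials and factorials, which you would then evaluate by a saddle-point/coalescing-saddles argument. That expectation is false, and the whole difficulty of the problem lives exactly there. After encoding the maximally reticulated networks by words (Lemma~\ref{lem:TC-cn-bij}), the quantities $b^{(d)}_{n,m}$ satisfy only the two-dimensional recurrence \eqref{eq:bnmsum}, $b_{n,m}^{(d)}=\binom{dn+m-2}{d-1}\sum_{j=1}^m b^{(d)}_{n-1,j}$; no closed form and no usable generating function is known, so there is no integral representation on which to run a saddle-point analysis, and the Airy function does not enter via coalescing saddles. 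Instead, the paper follows the method of Elvey Price--Fang--Wallner: one normalizes the recurrence (Lemma~\ref{mod-rec}), observes heuristically that in the window $m\asymp n^{1/3}$ the recurrence linearizes to the Airy ODE $f''(\kappa)=(c_1+\tfrac{2(d-1)}{d+1}\kappa)f(\kappa)$, and then \emph{proves} matching bounds by exhibiting explicit Airy-based sub- and super-solutions (Propositions~\ref{lem:AiryXLower} and~\ref{lem:AiryXUpper}) and propagating them through the recurrence by induction. This is why the result is only a $\Theta$-estimate: as the paper itself remarks, the method cannot produce the multiplicative constant, which is a strong indication that a saddle-point evaluation of the kind you propose (which would automatically yield that constant) is not currently available. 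To close the gap you would either need to supply the missing explicit formula or integral representation for $b^{(d)}_{n,m}$ — which is not known even for $d=2$ — or replace the saddle-point step by the inductive sub/super-solution argument.
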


\pagebreak

For the bicombining case, this result was proved in \cite{FuYuZh} by encoding tree-child networks with $n$ leaves and $n-1$ reticulation nodes by certain words and (asymptotically) counting these words with the method from \cite{ElFaWa}. In the more general $d$-combining case, we will use a similar strategy, however, details will be more demanding
due to the dependence on the parameter $d$.

As for the stochastic behavior of the number of reticulation nodes for general tree-child networks, we have a conjecture for the limit laws which we are going to present in Section~\ref{con+rem}. Note that even for $d=2$, no limit law for any shape parameter of random tree-child networks has been established yet.

Our conjecture will clarify the behavior of $\mathrm{TC}^{(d)}_{n,k}$ for $k$ close to $n$.
Thus, the behavior of $\mathrm{TC}^{(d)}_{n,k}$ for small and large $k$ is clear.
For the remaining range, there is an interesting recent conjecture for the bicombining case in~\cite{PoBa}, which has been proved for the special case of one-component tree-child networks in~\cite{FuLiYu}.
Whether this conjecture can be extended to $d$-combining tree-child networks is not clear yet; see the comments in Section~\ref{con+rem}.

We conclude the introduction with an outline of this extended abstract. In the next section, we will consider one-component networks and prove Theorem~\ref{formula-OTC} and Corollaries~\ref{Cor-1} and~\ref{Cor-2}. In Section~\ref{gen-net}, we will discuss our results for general networks. Finally, Section~\ref{con+rem} will contain the above mentioned conjecture for the limit laws of the number of reticulation nodes of a random $d$-combining tree-child network and some concluding remarks.

\section{One-Component Networks}

In this section, we will prove our results for one-component tree-child networks.
We start with Theorem~\ref{formula-OTC}.

\vspace*{0.3cm}\noindent{\it Proof of Theorem~\ref{formula-OTC}.} Suppose $N$ is a one-component $d$-combining tree-child network with $n-1$ leaves and $k-1$ reticulation nodes.

Then, we can construct one-component $d$-combining tree-child networks with $n$ leaves and $k$ reticulation nodes from $N$ by the following three steps:
(i) put $d$ new nodes into the {\it candidate edges} where we call an edge of $N$ a candidate edge if it is not incident to any reticulation node;
(ii) create a new reticulation node which is adjacent to the $d$ new nodes; and
(iii) add a new leaf as a child of this reticulation node;
moreover, label it with a label from $\{1,\ldots,n\}$ and increase all (old) labels in $N$ which are at least as large as the new label by $+1$ (if there are any).

Now, note that in step (i), we have
\[
\underbrace{n-1+(d-1)(k-1)-1}_{\substack{\text{\# edges leading to} \\ \text{a tree node; see (\ref{tree-nodes})}}}+\underbrace{n-1}_{\substack{\text{\# edges leading} \\ \text{to a leaf} }}-\underbrace{(k-1)}_{\substack{\text{\# edges below} \\ \text{ret.\ nodes}}}=2n+(d-2)(k-1)-3
\]
candidate edges and thus there are
\[
\binom{2n+(d-2)k-2}{d}
\]
choices of the $d$ nodes. Moreover, in step (iii), there are $n$ choices of the label. Finally, note that the above construction gives each network exactly $k$ times.

Overall, the above arguments give
\[
\mathrm{OTC}^{(d)}_{n,k}=\frac{n}{k}\binom{2n+(d-2)k-2}{d}\mathrm{OTC}^{(d)}_{n-1,k-1},
\]
and by iteration,
\[
\mathrm{OTC}^{(d)}_{n,k}=\binom{n}{k}\frac{(2n+(d-2)k-2)!}{d!^k(2n-k-2)!}\mathrm{OTC}^{(d)}_{n-k,0}.
\]
The result follows now by the fact that
\[
\mathrm{OTC}^{(d)}_{n-k,0}=(2(n-k)-3)!!=\frac{(2n-2k-2)!}{2^{n-k-1}(n-k-1)!}
\]
since $\mathrm{OTC}^{(d)}_{n-k,0}$ is the number of phylogenetic trees with $n-k$ leaves; e.g., see \cite[Section 2.1]{St}. \qed

Now, we can prove the two corollaries from above.

\vspace*{0.35cm}\noindent{\it Proof of Corollaries~\ref{Cor-1} and~\ref{Cor-2}.} Since the results for $d=2$ are already contained in \cite{FuYuZh} (see also \cite{FuLiYu}), we can focus on the cases $d\geq 3$.

We start with the case $d=3$. Note that
\[
\mathrm{OTC}^{(3)}_{n,k}=\binom{n}{k}\frac{(2n+k-2)!}{3^k2^{n-1}(n-k-1)!},\qquad (0\leq k\leq n-1)
\]
and this sequence is increasing in $k$. (This is in contrast to $d=2$ where this sequence has a maximum at $k=n-\sqrt{n+1}$; see \cite{FuYuZh}.) By replacing $k$ by $n-1-k$ and using Stirling's formula, we obtain that
\begin{equation}\label{exp-otc}
\mathrm{OTC}^{(3)}_{n,n-1-k}=\frac{1}{k!(k+1)!}\cdot\frac{n(3n-3)!}{6^{n-1}}\left(1+{\mathcal O}\left(\frac{1+k^2}{n}\right)\right)
\end{equation}
uniformly for $k$ with $k=o(\sqrt{n})$. Thus, by a standard application of the Laplace method:
\[
\mathrm{OTC}_n^{(3)}\sim\left(\sum_{k\geq 0}\frac{1}{k!(k+1)!}\right)\cdot\frac{n(3n-3)!}{6^{n-1}}=I_1(2)\cdot\frac{n(3n-3)!}{6^{n-1}}
\]
which is the first claim from Corollary~\ref{Cor-2}, (ii); the second follows from this by another application of Stirling's formula. Moreover, since
\[
\mathbb{P}(R_n^{(3)}=n-1-k)=\frac{\mathrm{OTC}^{(3)}_{n,n-1-k}}{\mathrm{OTC}_n^{(3)}},
\]
the result from Corollary~\ref{Cor-1}, (ii) follows from the above two expansions too.

Next, we consider the case $d\geq 4$.
The details of the proof are the same as above, with the main difference that the expansion~\eqref{exp-otc} now becomes
\[
\mathrm{OTC}^{(d)}_{n,n-1-k}=\left(\frac{d^2d!}{2d^d}\right)^k\frac{1}{k!(k+1)!}\cdot n^{(3-d)k}\cdot\frac{n(dn-d)!}{d!^{n-1}}\left(1+{\mathcal O}\left(\frac{1+k^2}{n}\right)\right)
\]
uniformly for $k$ with $k=o(\sqrt{n})$. This expansion, for $d\geq 4$, contains the (non-trivial decreasing) factor $n^{(3-d)k}$ which is responsible for $\mathrm{OTC}_n^{(d)}$ being now asymptotically dominated by $\mathrm{OTC}^{(d)}_{n,n-1}$ (proving Corollary~\ref{Cor-2}, (iii)) and the limiting distribution of $n-1-R_n^{(d)}$ being degenerate (proving Corollary~\ref{Cor-1}, (iii)).
\qed


\section{General Networks}\label{gen-net}

In this section, we will discuss the asymptotic enumeration of general $d$-combining tree-child networks with a fixed 
(Theorem~\ref{gen-tc-fixed-k}) and a maximal number of reticulation nodes (Theorem~\ref{gen-tc-max-k}). Note that the latter dominates asymptotically all networks of a given size.

We start with Theorem~\ref{gen-tc-fixed-k} on $d$-combining networks with a fixed number of reticulation nodes.
It can be proved by generalizing the approach from~\cite{FuHuYu}, which was based on the classification of tree-child networks via {\it component graphs} from \cite{CaZh}.
Component graphs can also be defined for $d$-combining tree-child networks and then be used to prove Theorem~\ref{gen-tc-fixed-k}; details will be given in the journal version of this work.
Moreover, component graphs can also be used as in \cite{CaZh} to
(a) compute $\mathrm{TC}^{(d)}_{n,k}$ for small values of $n$, $k$, and $d$ (see the Appendix); and
(b) give explicit formulas for small values of $k$.

The remainder of this section is devoted to the proof of Theorem~\ref{gen-tc-max-k}, which extends the approaches from \cite{ElFaWa,FuYuZh}.
We start with some lemmas which are generalizations of the corresponding results from \cite{FuYuZh} (and are proved with similar arguments).
The first lemma shows that the asymptotic growth of $\mathrm{TC}^{(d)}_n$ is, up to a constant, determined by the asymptotics of $\mathrm{TC}^{(d)}_{n,n-1}$.

\begin{lmm}
\label{lem:TC-Theta-TCnn1}
For $n \to \infty$, we have
\[
\mathrm{TC}^{(d)}_n=\Theta\left(\mathrm{TC}^{(d)}_{n,n-1}\right).
\]
\end{lmm}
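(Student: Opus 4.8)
The plan is to prove the two-sided bound
\[
\mathrm{TC}^{(d)}_{n,n-1} \;\le\; \mathrm{TC}^{(d)}_n \;=\; \sum_{k=0}^{n-1}\mathrm{TC}^{(d)}_{n,k} \;\le\; C\cdot\mathrm{TC}^{(d)}_{n,n-1}
\]
for some constant $C=C(d)$ independent of $n$. The lower bound is immediate since $\mathrm{TC}^{(d)}_{n,n-1}$ is a single term in the sum defining $\mathrm{TC}^{(d)}_n$, so all the work is in the upper bound, which amounts to showing that the tail of the sum is dominated by its top term. The natural strategy is to control the ratio of consecutive terms $\mathrm{TC}^{(d)}_{n,k}/\mathrm{TC}^{(d)}_{n,k+1}$ and show that, as $k$ decreases away from $n-1$, the terms decay at least geometrically so that the whole sum is a convergent geometric-type series times the maximal term.

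First I would establish an injection (or a counting inequality) relating networks with $k$ reticulation nodes to those with $k+1$, to get a bound of the form $\mathrm{TC}^{(d)}_{n,k} \le \rho_{n,k}\,\mathrm{TC}^{(d)}_{n,k+1}$. Since each reticulation node in the $d$-combining case absorbs $d$ parent-edges and the tree-child constraints are local, adding one more reticulation node to a network (in the spirit of the construction used in the proof of Theorem~\ref{formula-OTC}) multiplies the count by a factor that grows with $n$; conversely, removing a reticulation node and splicing out its incident edges is a well-defined operation up to bounded multiplicity. I expect the resulting ratio bound to take the shape $\mathrm{TC}^{(d)}_{n,k}/\mathrm{TC}^{(d)}_{n,k+1} = \LandauO(1/n^{c})$ for some $c>0$, or at worst bounded by a constant $q<1$ uniformly in $k$, which is all that is needed. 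Summing the geometric series $\sum_{j\ge 0} q^{\,j}$ then yields the constant $C$ and closes the upper bound.

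The main obstacle will be making the map from $(k{+}1)$-reticulation networks down to $k$-reticulation networks precise enough to extract a clean ratio inequality that holds \emph{uniformly} in $k$, not just near $k=n-1$. Unlike the one-component case of Theorem~\ref{formula-OTC}, where an exact recurrence is available, general tree-child networks have no such closed recurrence, so one must argue combinatorially: deleting a reticulation node can violate the tree-child property or create indegree/outdegree-$1$ vertices that must be suppressed, and reconstructing the original network from the smaller one involves choosing where to reinsert the $d$ incoming edges, which is where the $n$-dependence of the ratio enters. Controlling the multiplicity of this reconstruction — i.e.\ bounding how many $(k{+}1)$-networks can map to the same $k$-network — is the delicate step, and it is precisely the $d$-dependence flagged in the text as making the general case more demanding than $d=2$.

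I would therefore structure the argument as: (1) define the edge-contraction/reticulation-deletion operation and verify it lands in the tree-child class after suppressing degenerate vertices; (2) bound below the number of preimages of a fixed $k$-network under the inverse (reinsertion) operation, obtaining the ratio bound; (3) sum over $k$ to get the geometric domination and hence the constant $C$. Step (2) is the crux; everything else is bookkeeping that parallels the corresponding lemma in~\cite{FuYuZh}.
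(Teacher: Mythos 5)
Your skeleton is the right one and matches the paper's: the lower bound is the trivial inclusion of the $k=n-1$ term, and the upper bound follows from a uniform-in-$k$ bound on the ratio $\mathrm{TC}^{(d)}_{n,k}/\mathrm{TC}^{(d)}_{n,k+1}$ that forces geometric decay of the terms away from $k=n-1$. You are also right that a uniform constant $q<1$ suffices. But the step you yourself flag as the crux --- producing that ratio inequality --- is left unexecuted, and the route you propose for it (delete a reticulation node from a $(k{+}1)$-network, suppress degenerate vertices, and bound the multiplicity of the reconstruction) is the harder direction: as you note, deletion can break the tree-child property, and controlling how many $(k{+}1)$-networks collapse onto a given $k$-network is exactly the part you have not shown how to do. As it stands the argument is a plan, not a proof.

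The paper closes this gap by working only in the insertion direction and by identifying the right place to insert. Call a tree node \emph{free} if neither of its children is a reticulation node, and call the two edges to its children \emph{free edges}. A counting argument using~\eqref{tree-nodes} (there are $n+(d-1)k-1$ tree nodes, of which exactly $dk$ have a reticulation child, since parents of reticulation nodes are tree nodes and each has at most one reticulation child) shows a network with $k$ reticulation nodes has exactly $2(n-k-1)$ free edges. Given such a network, subdivide the root edge with $d$ new tree nodes, place a new reticulation node on a chosen free edge, and join the $d$ new tree nodes to it; the result is tree-child (the reticulation node's child is the non-reticulation endpoint of the free edge), and distinct choices of free edge in distinct networks yield distinct outputs. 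Hence $2(n-k-1)\,\mathrm{TC}^{(d)}_{n,k}\leq\mathrm{TC}^{(d)}_{n,k+1}$, which iterates to $\mathrm{TC}^{(d)}_{n,k}\leq\mathrm{TC}^{(d)}_{n,n-1}/\bigl(2^{n-k-1}(n-k-1)!\bigr)$ and gives the explicit constant $C=\sum_{j\geq0}1/(2^jj!)=\sqrt{e}$ --- strictly better than the plain geometric bound, though either suffices for the $\Theta$-statement. No deletion operation, and hence no multiplicity analysis, is needed.
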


\begin{proof}
Let $N$ be a $d$-combining tree-child network with $n$ leaves and $k$ reticulation nodes.
A {\it free tree node} is a tree node whose children are both not reticulation nodes; the edges to these children are called {\it free edges}. Using a simple counting argument, it is easy to see that $N$ has $2(n-k-1)$ free edges; see \cite[Lemma~1]{FuYuZh} for the case $d=2$.

Next, we can construct $d$-combining tree-child networks with $n$ leaves and $k+1$ reticulation nodes by (i) inserting $d$ tree nodes into the root edge of $N$ and a reticulation node into a free edge and (ii) connecting the $d$ new tree nodes to the new reticulation node. Note that each network built in this way is different. Thus,
\[
2(n-k-1)\mathrm{TC}^{(d)}_{n,k}\leq\mathrm{TC}^{(d)}_{n,k+1}.
\]
Iterating this construction yields
\begin{equation}\label{tc-number-ub}
\mathrm{TC}^{(d)}_{n,k}\leq\frac{1}{2^{n-k-1}(n-k-1)!}\mathrm{TC}^{(d)}_{n,n-1}
\end{equation}
and thus,
\[
\mathrm{TC}^{(d)}_{n,n-1}\leq\mathrm{TC}^{(d)}_{n}\leq\bigg(\sum_{j\geq 0}\frac{1}{2^j j!}\bigg)\cdot\mathrm{TC}^{(d)}_{n,n-1}=
\sqrt{e}\cdot\mathrm{TC}^{(d)}_{n,n-1},
\]
which proves the claim.
\end{proof}

Next, we define a generalization of the class of words from \cite{FuYuZh} which is used to encode $d$-combining tree-child networks with a maximal number of reticulation nodes.

\begin{df}\label{words-tc}
Let $\mathcal{C}_n^{(d)}$ denote the class of words built from $n$ letters $\{\omega_1,\ldots,\omega_n\}$ in which each letter occurs exactly $d+1$ times such that in every prefix the letter $\omega_i$ has either not yet occurred more than $d-2$ times, or, if it has, then the number of occurrences of $\omega_i$ is at least as large as the number of occurrences of $\omega_j$ for all $j>i$.
\end{df}

In \cite{FuYuZh}, a bijection between bicombining tree-child networks with $n$ leaves and $k$ reticulationd nodes whose labels are removed and $\mathcal{C}_{n-1}^{(2)}$ was proved. In fact, this bijection can be extended to $d$-combining networks. Then, because the networks are all different when labeling the (now empty) leaves by a random permutation, we obtain the following lemma.

\begin{lmm}
\label{lem:TC-cn-bij}
Let $c_n^{(d)}$ be the cardinality of $\mathcal{C}_{n}^{(d)}$. Then
\[
\mathrm{TC}_{n,n-1}^{(d)}=n!c_{n-1}^{(d)}.
\]
\end{lmm}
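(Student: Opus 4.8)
The plan is to prove the identity in two stages. First I would establish a bijection between the set of \emph{leaf-unlabeled} $d$-combining tree-child networks with $n$ leaves and $n-1$ reticulation nodes and the word class $\mathcal{C}_{n-1}^{(d)}$ of Definition~\ref{words-tc}; this shows that the number of such shapes equals $c_{n-1}^{(d)}$. Then I would show that attaching the $n!$ possible bijective leaf-labelings to any fixed shape always produces pairwise distinct labeled networks, which supplies the factor $n!$ and yields $\mathrm{TC}_{n,n-1}^{(d)}=n!\,c_{n-1}^{(d)}$. Since a maximally reticulated network has exactly $n-1$ reticulation nodes, the counts $n-1$ (letters) and $n-1$ (reticulations) match, as do the total letter-counts with the edges incident to reticulation nodes.

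For the bijection I would follow the strategy of \cite{FuYuZh} for $d=2$ and extend it. I would fix a canonical traversal of the shape that visits the reticulation nodes in a well-defined linear order $r_1,\dots,r_{n-1}$ and associate the letter $\omega_i$ to $r_i$. Each reticulation node is incident to exactly $d+1$ edges (its $d$ incoming edges together with its single outgoing edge), and the word is read off by recording an occurrence of $\omega_i$ each time one of these $d+1$ edges is processed along the traversal; this produces a word on $n-1$ letters in which every letter occurs exactly $d+1$ times. The essential point is to match the prefix condition of $\mathcal{C}_{n-1}^{(d)}$ with the structural constraints of a $d$-combining tree-child network. The genuinely new feature compared with $d=2$ is the threshold $d-2$: a $d$-reticulation has $d-2$ more parents than a bicombining one, and these extra parent edges can be attached \emph{freely}, which is exactly mirrored by the clause permitting $\omega_i$ to occur up to $d-2$ times without restriction, while the remaining three occurrences (the ``core'' consisting of the two final parents and the child edge, as in the bicombining case) must obey the domination condition that encodes the tree-child property (no reticulation directly below the root or below another reticulation, and every tree node having a non-reticulation child) together with acyclicity of the DAG.

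To complete the bijection I would construct the inverse map by reading a word of $\mathcal{C}_{n-1}^{(d)}$ from left to right, maintaining the current set of open lineages and, at the occurrences of each letter $\omega_i$, inserting the reticulation node $r_i$ and re-attaching its parent and child edges, with the ``free'' occurrences placing the $d-2$ extra parents and the three constrained occurrences assembling the core. I would then verify that the prefix condition guarantees that this reconstruction never creates a forbidden reticulation-above-reticulation configuration, never closes a directed cycle, and terminates with exactly $n$ leaves, so that the image is a valid network and the two maps are mutually inverse; hence the number of shapes is $c_{n-1}^{(d)}$. For the factor $n!$ I would argue that every such shape is \emph{rigid}, i.e.\ has trivial automorphism group, so that its $n!$ leaf-labelings give $n!$ distinct labeled networks. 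This follows from the canonical nature of the encoding: the traversal assigns to every node (in particular every leaf) a position determined by the shape alone, so any automorphism must fix all leaves and is therefore the identity.

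I expect the main obstacle to be the verification that the prefix condition of $\mathcal{C}_{n-1}^{(d)}$ corresponds \emph{exactly} to the tree-child and acyclicity constraints for general $d$; in particular, isolating the role of the new $d-2$ threshold and checking that the $d-2$ free parent insertions, together with the three constrained core occurrences, reproduce precisely the admissible $d$-reticulation structures and nothing more is the delicate part. By contrast, the rigidity argument and the counting of the $n!$ labelings should be comparatively routine once the canonical encoding of the second and third paragraphs is in place.
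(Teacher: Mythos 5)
Your proposal follows essentially the same route as the paper: the paper obtains the lemma by extending the bijection of \cite{FuYuZh} between leaf-unlabeled maximally reticulated networks and $\mathcal{C}_{n-1}^{(d)}$ (letters corresponding to reticulation nodes, the $d+1$ occurrences to the $d$ parent edges plus the child edge) and then multiplying by $n!$ for the leaf labelings, whose pairwise distinctness the paper simply asserts where you supply a rigidity argument. Being an extended abstract, the paper does not carry out the verification you flag as the delicate step (that the prefix condition matches the tree-child and acyclicity constraints exactly) either, deferring it to the journal version, so your plan is neither missing nor adding anything essential relative to the paper's own proof.
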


Now the recursive nature of this encoding allows us to derive the following counting result.

\begin{lmm}
\label{lem:cn-bnm}
We have
\begin{align}
\label{eq:cn-bnm}
c_{n}^{(d)}=\sum_{m\geq 1}b_{n,m}^{(d)},
\end{align}
where $b_{n,m}^{(d)}$ satisfies the recurrence
\begin{align}
\label{eq:recbnm}
b_{n,m}^{(d)}=\frac{dn+m-2}{dn+m-d-1}b_{n,m-1}^{(d)}+\binom{dn+m-2}{d-1}b_{n-1,m}^{(d)},\qquad (n\geq 2,0\leq m\leq n)
\end{align}
with
$b_{1,1}^{(d)}=1$ and $b_{n,m}^{(d)}=0$ for (i) $n\geq 2$ and $m=-1$; (ii) $n=1$ and $m=0$; and (iii) $n<m$.
\end{lmm}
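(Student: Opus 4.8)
The plan is to derive both identities from a single recursive construction of the words in $\mathcal{C}_n^{(d)}$, namely insertion of a new \emph{smallest} letter. Starting from a word in $\mathcal{C}_{n-1}^{(d)}$ I relabel $\omega_i\mapsto\omega_{i+1}$ and insert the $d+1$ copies of a fresh letter $\omega_1$. The first thing to check is that this decouples cleanly from the constraint of Definition~\ref{words-tc}: since deleting every copy of the smallest letter from a prefix preserves the multiplicities of all $\omega_j$ with $j\ge 2$ and sends prefixes to prefixes, the conditions on $\omega_2,\dots,\omega_n$ hold in the new word if and only if they held in the old one. Hence ``erase the smallest letter'' is a well-defined surjection $\mathcal{C}_n^{(d)}\to\mathcal{C}_{n-1}^{(d)}$, and computing $c_n^{(d)}$ amounts to counting its fibres.

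Next I would translate the remaining constraint, the one for $\omega_1$ itself, into placement rules for the inserted copies. Let $M(t)$ denote the maximal multiplicity occurring in the length-$t$ prefix of the base word. Because the condition only bites once $\omega_1$ has appeared at least $d-1$ times, the first $d-1$ copies may be placed freely (subject only to their left-to-right order), while the $d$-th and $(d+1)$-st copies must be inserted before $M$ exceeds $d-1$ and $d$, respectively. Consequently the number of admissible insertions depends on the base word only through the positions at which $M$ first reaches $d$ and $d+1$, and for each fixed position of the critical ($d$-th) copy the $d-1$ free copies contribute a binomial factor. I would then \emph{define} $b_{n,m}^{(d)}$ as the number of words of $\mathcal{C}_n^{(d)}$ for which this critical statistic equals $m$ (so that $m$ records, up to normalisation, where the running maximum first attains $d$); with this definition $c_n^{(d)}=\sum_{m\ge1}b_{n,m}^{(d)}$ holds by construction, and $m$ ranges in $1\le m\le n$, which already accounts for the vanishing when $n<m$.

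The two-term recurrence is then obtained by sorting the words of $\mathcal{C}_n^{(d)}$ according to this statistic. The term $\binom{dn+m-2}{d-1}\,b_{n-1,m}^{(d)}$ collects the words obtained by inserting a brand-new smallest letter into an $(n-1)$-letter word of statistic $m$, the binomial being exactly the number of ways to distribute the $d-1$ free copies among the $dn+m-2$ available positions. The prefactor of $b_{n,m-1}^{(d)}$ is recognised as the binomial ratio $\binom{dn+m-2}{d-1}\big/\binom{dn+m-3}{d-1}=\tfrac{dn+m-2}{dn+m-d-1}$, which records advancing the critical copy by one position, i.e.\ passing from statistic $m-1$ to $m$ at fixed $n$: this single step opens up one extra slot for the free copies and multiplies their count by precisely this ratio. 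The initial value $b_{1,1}^{(d)}=1$ corresponds to the unique word $\omega_1^{d+1}$, and the remaining vanishing conditions ($m=-1$, and $n=1$ with $m=0$) fall out as the empty ranges of the statistic; all of these I would verify directly.

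The step I expect to be the real obstacle is the reduction from a \emph{two}-parameter to a \emph{one}-parameter count. A priori the size of a fibre depends on the positions where $M$ reaches both $d$ and $d+1$, so the genuine content of the lemma is that grouping the words by the single statistic $m$, together with the sliding relation captured by the first term, reorganises this double dependence into the stated recurrence with the coefficients matching \emph{on the nose} rather than merely up to proportionality. Establishing that the running maximum is controlled by the smallest letter (so that the two critical positions are governed by its $d$-th and $(d+1)$-st copies), and checking that the degenerate boundary terms are produced consistently, is where the bookkeeping is most delicate; the surrounding estimates are then routine.
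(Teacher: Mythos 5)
Your structural observations are sound---erasing the smallest letter is indeed a well-defined surjection onto $\mathcal{C}_{n-1}^{(d)}$, and it is true (and worth proving) that $\omega_1$ is the first letter to reach $d$ and then $d+1$ occurrences, so the running maximum is controlled by the smallest letter. But the decomposition you build on this does not produce the stated recurrence, and the difficulty you flag at the end is fatal rather than a bookkeeping issue. Concretely: the $d-1$ ``free'' copies of the inserted smallest letter can only occupy positions \emph{before} its $d$-th copy, and since $\omega_1$ is the first letter to reach $d$ occurrences, in the prefix ending just before that copy every letter has at most $d-1$ occurrences, so that copy sits at position $p\le (d-1)n+1$. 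The binomial factor your construction produces is therefore $\binom{p-1}{d-1}$ with $p-1\le (d-1)n$, which for $n\ge 2$ and $m\ge 1$ is strictly smaller than $\binom{dn+m-2}{d-1}$ (note $dn+m-2$ is close to the full word length $(d+1)n$). Moreover the size of a fibre genuinely depends on \emph{two} statistics of the base word (where its running maximum first reaches $d$ and where it first reaches $d+1$), while the statistic of the new word is determined by where you insert the $d$-th copy rather than inherited from the base word; so the recursion does not close on a single parameter, and the ``sliding'' interpretation of the coefficient $\frac{dn+m-2}{dn+m-d-1}$---a non-integer ratio---has no meaning at the level of individual words. You have not established the identity from which the two-term recurrence must be derived.

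The paper works from the other end. Every word in $\mathcal{C}_n^{(d)}$ must end in $\omega_n$, and more precisely has a suffix $\omega_n\omega_m\omega_{m+1}\cdots\omega_{n-1}\omega_n$ for a unique $m$; the quantity $b_{n,m}^{(d)}$ is \emph{defined} as the number of words with this suffix, which gives \eqref{eq:cn-bnm} immediately. Deleting all copies of the \emph{largest} letter leaves a word counted by $b_{n-1,j}^{(d)}$ for some $j\le m$, and reinserting $\omega_n$ pins its last two occurrences to explicit positions (one just before the final $\omega_m\cdots\omega_{n-1}$ block, one at the end) while the first $d-1$ may go anywhere among the first $dn+m-2$ positions. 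This yields the clean summation identity $b_{n,m}^{(d)}=\binom{dn+m-2}{d-1}\sum_{j=1}^m b_{n-1,j}^{(d)}$, and \eqref{eq:recbnm} follows by differencing in $m$. If you wish to salvage the bottom-up insertion, you would have to carry a two-parameter refinement and then prove an equidistribution with the suffix statistic; the top-down deletion avoids this entirely because the constrained occurrences of the largest letter land at forced positions rather than in intervals whose lengths vary with the base word.
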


\begin{proof}
First, note that any word in $\mathcal{C}_n^{(d)}$ has a suffix $\omega_n\omega_m\omega_{m+1}\cdots\omega_{n-1}\omega_n$ with $1\leq m\leq n$.
Denote by $b_{n,m}^{(d)}$ the number of these words. Removing the $d$ occurrences of $\omega_n$ from these words gives a word of $\mathcal{C}_{n-1}^{(d)}$ with suffix $\omega_m\omega_{m+1}\cdots\omega_{n-1}$, i.e., it has a suffix $\omega_{n-1} \omega_j\omega_{j+1}\dots \omega_{n-1}$ for $j=1,\dots,m$.
Reversing this procedure gives
\begin{align}
\label{eq:bnmsum}
b_{n,m}^{(d)}=\binom{dn+m-2}{d-1}\sum_{j=1}^{m}b_{n-1,j}^{(d)},
\end{align}
where the binomial coefficient counts the number of ways of adding back the $d-1$ occurrences of $\omega_n$ after two $\omega_n$'s have been added, one before the last $\omega_m$ and one at the end of the word.
By Definition~\ref{words-tc} these first $d-1$ occurrences of $\omega_n$ may be anywhere.
Differencing yields
\[
\frac{b_{n,m}^{(d)}}{\binom{dn+m-2}{d-1}}-\frac{b_{n,m-1}^{(d)}}{\binom{dn+m-3}{d-1}}=b_{n-1,m}^{(d)}.
\]
This gives the  claimed recurrence and the initial conditions are easily checked.
\end{proof}

The advantage of the recurrence for $b_{n,m}^{(d)}$ is that we are actually only interested in the asymptotics of $b_{n,n}^{(d)}$ as  by~\eqref{eq:cn-bnm} and \eqref{eq:bnmsum} we have
\begin{align*}
   b_{n,n}^{(d)}
    = \binom{(d+1)n-2}{d-1}c_{n-1}^{(d)}.
\end{align*}

\newcommand{\dd}{e^{(d)}}
\newcommand{\ddh}{\hat{e}^{(d)}}

Now we are ready to use the method from~\cite{ElFaWa}.
Due to the similarities, we will only discuss the main differences.
We start with the following transformation of $(b_{n,m})_{0 \leq m \leq n}$ to $(\dd_{i,j})_{\substack{0 \leq i \leq j \\ i-j \text{ even}}}$, which changes the indices and captures the exponential and superexponential terms coming from the binomial coefficient in~\eqref{eq:recbnm}.

\begin{lmm}\label{mod-rec}
We have
\begin{align*}
     b^{(d)}_{n,m} &= \lambda(d)^n (n!)^{d-1} \dd_{n+m,n-m}
     && \text{ with } &
     \lambda(d) &= \frac{(d+1)^{d-1}}{(d-1)!},
\end{align*}
where $\dd_{n,m}$ satisfies the following recurrence
{%
\small
\begin{align}
    \label{eq:recdnm}
    \dd_{n,m} &= \mu_{n,m}^{(d)} \dd_{n-1,m+1} + \nu_{n,m}^{(d)} \dd_{n-1,m-1}
\end{align}
}%
with
\[
\mu^{(d)}_{n,m}=1+\frac{2(d-1)}{(d+1)n + (d-1)m - 2(d+1)}\quad\text{and}\quad \nu^{(d)}_{n,m}=\prod_{i=2}^d \left(1 - \frac{2(m+i)}{(d+1)(n+m)}\right)
\]
for $n \geq 3$ and $m \geq 0$, where $\dd_{n,-1}=\dd_{2,n}=0$ except for $\dd_{2,0}=1/\lambda(d)$.
\end{lmm}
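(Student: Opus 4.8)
The statement is an exact reformulation of \eqref{eq:recbnm}, so the plan is to substitute the ansatz directly and read off \eqref{eq:recdnm}. First I would record that the map $(n,m)\mapsto(N,M):=(n+m,\,n-m)$ is a bijection from the admissible set $\{0\le m\le n\}$ onto $\{0\le M\le N,\ N\equiv M \pmod 2\}$, with inverse $n=(N+M)/2$, $m=(N-M)/2$; these $(N,M)$ are precisely the indices carried by $\dd$ in the statement of \eqref{eq:recdnm}. Under this substitution the three entries appearing in \eqref{eq:recbnm} transform as
\[
b_{n,m}\leftrightarrow \dd_{N,M},\qquad b_{n,m-1}\leftrightarrow \dd_{N-1,M+1},\qquad b_{n-1,m}\leftrightarrow \dd_{N-1,M-1},
\]
which already reproduces the shape of \eqref{eq:recdnm}: a two-term up/down recurrence that lowers the first index by one.

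Next I would divide \eqref{eq:recbnm} by the prefactor $\lambda(d)^n (n!)^{d-1}$ attached to $b_{n,m}$. Since $b_{n,m-1}$ carries the same prefactor, its coefficient is unchanged and equals $\mu = \frac{dn+m-2}{dn+m-d-1}$; since $b_{n-1,m}$ carries prefactor $\lambda(d)^{n-1}((n-1)!)^{d-1}$, the ratio contributes an extra factor $((n-1)!)^{d-1}/(\lambda(d)\,(n!)^{d-1}) = 1/(\lambda(d)\,n^{d-1})$, so that $\nu = \binom{dn+m-2}{d-1}\,\frac{1}{\lambda(d)\,n^{d-1}}$. The remaining work is to rewrite both coefficients in the $(N,M)$ variables via the two identities $dn+m=\tfrac12\big((d+1)N+(d-1)M\big)$ and $(d+1)n=\tfrac12(d+1)(N+M)$. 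For $\mu$ this is a one-line simplification, yielding $1+\frac{2(d-1)}{(d+1)N+(d-1)M-2(d+1)}$, which matches $\mu^{(d)}_{N,M}$.

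For $\nu$ I would expand the binomial as the product $\binom{dn+m-2}{d-1}=\frac{1}{(d-1)!}\prod_{i=2}^{d}(dn+m-i)$ and cancel the $(d-1)!$ against the one hidden in $1/\lambda(d)=(d-1)!/(d+1)^{d-1}$; writing $(d+1)^{d-1}n^{d-1}=\prod_{i=2}^d (d+1)n$ then turns $\nu$ into $\prod_{i=2}^d \frac{dn+m-i}{(d+1)n}$, and substituting the two identities collapses each factor to $1-\frac{2(M+i)}{(d+1)(N+M)}$, i.e.\ exactly $\nu^{(d)}_{N,M}$. This step is the crux: it is here that the precise value $\lambda(d)=(d+1)^{d-1}/(d-1)!$ must be used, since it is the unique normalization that absorbs the super-exponential and exponential growth of the binomial coefficient and leaves $\nu$ as a finite product with common numerator $(d+1)(N+M)$ whose factors tend to $1$, as required by the method of \cite{ElFaWa}. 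Finally I would translate the initial data: $b^{(d)}_{1,1}=1$ gives $\dd_{2,0}=1/\lambda(d)$, and the vanishing of $b^{(d)}_{n,m}$ at $m=-1$ and for out-of-range arguments gives $\dd_{N,-1}=\dd_{2,N}=0$ with the one stated exception.

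The argument is a verification rather than a discovery, so the only mild obstacle is bookkeeping: keeping the $b$-indices and the $\dd$-indices distinct throughout the change of variables, and carrying out the binomial-to-product manipulation so that the factor $1/(\lambda(d)\,n^{d-1})$ distributes cleanly over the $d-1$ numerator factors. I expect no conceptual difficulty beyond confirming that the stated $\lambda(d)$ is forced by the requirement that $\nu$ take this product form.
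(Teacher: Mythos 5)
Your proposal is correct and coincides with what the paper does implicitly: Lemma~\ref{mod-rec} is stated without proof as a routine change of variables, and your direct substitution $(N,M)=(n+m,n-m)$ into \eqref{eq:recbnm}, with the cancellation of $(d-1)!$ against $1/\lambda(d)$ and the identity $(d+1)N+(d-1)M-2i=(d+1)(N+M)-2(M+i)$, is exactly the verification that is being deferred to the accompanying worksheet. All index translations, the forms of $\mu^{(d)}_{N,M}$ and $\nu^{(d)}_{N,M}$, and the initial data check out.
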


Note that we are interested in $\dd_{2n,0} = \frac{b^{(d)}_{n,n}}{\lambda(d)^n (n!)^{d-1}}$ because by the previous lemmas
we have
{
\begin{align}
    \label{eq:thetachain}
    \mathrm{TC}^{(d)}_n
    =\Theta\left( \mathrm{TC}^{(d)}_{n,n-1} \right)
    =\Theta\left( n!c_{n-1}^{(d)} \right)
    =\Theta\left( n! \, n^{1-d} \, b_{n,n}^{(d)} \right)
    =\Theta\left( (n!)^d \, \lambda(d)^n \, n^{1-d} \, \dd_{2n,0} \right)
    .
\end{align}
}%
Moreover, observe that for the Theta-result the initial value of $\dd_{2,0}$ is irrelevant, as it creates only a constant factor. So we may set it to $\dd_{2,0}=1$, or any convenient constant.
Note that this recurrence is very similar to that of relaxed trees~\cite[Equation~(2)]{ElFaWa}, yet with more complicated factors. Observe also that this is exactly recurrence~\cite[Equation~(10)]{FuYuZh} for $d=2$.

\newcommand{\bb}{B}

Motivated by experiments for large $n$, we use the following ansatz
\begin{align*}
    \dd_{n,m} \approx h(n) f\left(\frac{m+1}{n^{1/3}}\right),
\end{align*}
where $h$ and $f$ are some ``regular'' functions. Next, we substitute $s(n)=h(n)/h(n-1)$ and $m=\kappa n^{1/3}-1$ into~\eqref{eq:recdnm}. Then, for $n\to\infty$ we get the expansion
\begin{align*}
    f(\kappa)s(n) &= 2 f(\kappa) + \left(f''(\kappa) - \frac{2(d-1)}{d+1} \kappa f(\kappa)\right) n^{-2/3} + \LandauO\left(n^{-1}\right).
\end{align*}
Hence, we may assume that
$
    s(n) = 2 + c_1 n^{-2/3} + c_2 n^{-1} + \dots
$
and this implies that $f(\kappa)$ satisfies the differential equation
\[
    f''(\kappa) = \left(c_1 + \frac{2(d-1)}{d+1}\kappa\right)f(\kappa)
\]
that is solved by the Airy function $\Ai$ of the first kind. Additionally, the boundary conditions allow to compute $c_1$ and we get that
\[
    f(\kappa) = C \Ai\left( a_1 + \bb^{1/3} \kappa \right)
    \qquad
    \text{ where }
    \qquad
    \bb: = \frac{2(d-1)}{d+1},
\]
$a_1 \approx 2.338$ is the largest root of the Airy function $\Ai$, and $C$ is an arbitrary constant.
From this we get that $c_1 = a_1 \bb^{1/3}$.
These heuristic arguments guide us to the following results.
The proofs are analogous to~\cite{FuYuZh,ElFaWa,ElFaWa2020}; for the details we refer to the accompanying Maple worksheet~\cite{Wa}.

\begin{pro}
	\label{lem:AiryXLower}
	For all $n,m\geq0$ let
	\begin{align*}	
		\tilde{X}_{n,m} &:= \left(1-\frac{2d-1}{3(d+1)}\frac{m^2}{n} - \frac{3d^2+12-11}{6(d+1)}\frac{m}{n}\right)\Ai\left(a_{1}+\frac{\bb^{1/3}(m+1)}{n^{1/3}}\right)~~~~~~~~\text{and}\\
		 \tilde{s}_n &:= 2+\frac{a_1 \bb^{2/3}}{n^{2/3}} - \frac{3d^2-5d+4}{3(d+1)n} - \frac{1}{n^{7/6}}.
	\end{align*}
	Then, for any $\varepsilon>0$, there exists an $\tilde{n}_0$ such that
	\begin{align*}
		\tilde{X}_{n,m}\tilde{s}_{n} \leq \mu_{n,m}^{(d)} \tilde{X}_{n-1,m+1} \!+\! \nu_{n,m}^{(d)} \tilde{X}_{n-1,m-1}
	\end{align*}
	for all $n\geq \tilde{n}_0$ and for all $0 \leq m < n^{2/3-\varepsilon}$, where $\mu^{(d)}_{n,m}$ and $\nu^{(d)}_{n,m}$ are as in Lemma~\ref{mod-rec}.
\end{pro}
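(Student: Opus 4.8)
The plan is to verify the claimed inequality by treating it as a perturbed version of the exact recurrence for $\dd_{n,m}$ and showing that $\tilde{X}_{n,m}$ is a \emph{subsolution}. Concretely, I would define the discrepancy
\[
\Delta_{n,m}:=\mu_{n,m}^{(d)}\tilde{X}_{n-1,m+1}+\nu_{n,m}^{(d)}\tilde{X}_{n-1,m-1}-\tilde{X}_{n,m}\tilde{s}_n
\]
and prove that $\Delta_{n,m}\geq 0$ for all large $n$ and all $0\leq m< n^{2/3-\varepsilon}$. The entire argument is an asymptotic expansion of $\Delta_{n,m}$ in descending powers of $n$, with $m$ allowed to grow like $n^{2/3-\varepsilon}$, so every bound must be uniform in $m$ in that range. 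I would organize the bookkeeping by the natural small parameter $\xi:=(m+1)/n^{1/3}$, which ranges over $[0,n^{1/3-\varepsilon})$ and is therefore itself a growing quantity; this is why the error terms must be controlled as powers of $n$ rather than treated as genuinely small.

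First I would Taylor-expand the three Airy factors. Writing $\Ai$ evaluated at $a_1+\bb^{1/3}\xi$ as the base point, the shifts $m\mapsto m\pm1$ and $n\mapsto n-1$ inside the argument $a_1+\bb^{1/3}(m+1\pm1)/(n-1)^{1/3}$ produce increments of order $n^{-1/3}$, so I expand $\Ai$ to several orders using Taylor's theorem with remainder, replacing $\Ai''$ via the defining relation $\Ai''(z)=z\,\Ai(z)$ to keep everything in terms of $\Ai$ and $\Ai'$. Second, I would expand the polynomial prefactors $1-\tfrac{2d-1}{3(d+1)}\tfrac{m^2}{n}-\cdots$ and the coefficients $\mu_{n,m}^{(d)}$, $\nu_{n,m}^{(d)}$ from Lemma~\ref{mod-rec}; note that $\nu_{n,m}^{(d)}$ is a $(d-1)$-fold product, so its expansion is the combinatorially heaviest piece and is where the dependence on $d$ enters most intricately. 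Third, I collect all contributions to $\Delta_{n,m}$ by order. The design of $\tilde{X}_{n,m}$ and $\tilde{s}_n$ is precisely such that the $n^0$, $n^{-2/3}$, and $n^{-1}$ coefficients cancel — this cancellation reproduces the Airy differential equation $f''=(c_1+\bb\kappa)f$ with $c_1=a_1\bb^{1/3}$ from the heuristic — leaving a leading surviving term of order $n^{-7/6}$ whose sign I must control.

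The crux is the sign of that leading surviving term, which is engineered by the artificial $-n^{-7/6}$ correction placed in $\tilde{s}_n$. Subtracting $n^{-7/6}$ from $\tilde{s}_n$ adds $+\,n^{-7/6}\tilde{X}_{n,m}$ to $\Delta_{n,m}$, and since $\tilde{X}_{n,m}>0$ on the relevant range (the Airy argument stays below $a_1$ for $\xi$ small and the prefactor is positive once $m^2/n$ is small, which holds because $m<n^{2/3-\varepsilon}$ gives $m^2/n<n^{1/3-2\varepsilon}\cdot n^{-\varepsilon'}\to$ controlled), this positive term dominates the genuine $\LandauO(n^{-7/6})$ remainder for $n$ large. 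The hard part will be making the remainder estimate uniform as $m$ approaches the upper cutoff $n^{2/3-\varepsilon}$: there $\xi\asymp n^{1/3-\varepsilon}$ is large, so the higher Taylor coefficients carry growing powers of $\xi$, and one must check that these are still dominated by the gain from the $n^{-7/6}$ term. This is exactly where the exponent $2/3-\varepsilon$ (rather than $2/3$) is needed, and it is the technical heart that I would isolate as a lemma on the growth of the expansion coefficients in $\xi$. The remaining work — verifying the boundary behavior near $m=0$ and assembling the orders — is routine but lengthy, which is why the paper defers it to the accompanying worksheet~\cite{Wa}; I would mirror that by carrying out the order-by-order cancellation symbolically and then giving the uniform remainder bound by hand.
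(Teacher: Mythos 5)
Your overall strategy is exactly the one the paper intends: the paper gives no written proof of this proposition, deferring to the accompanying Maple worksheet and to the analogous verifications in \cite{FuYuZh,ElFaWa,ElFaWa2020}, and what those sources do is precisely your plan --- expand the discrepancy $\Delta_{n,m}$ in powers of $n$ with $\kappa=(m+1)/n^{1/3}$ as the scaling variable, watch the orders $n^{0}$, $n^{-2/3}$, $n^{-1}$ cancel by the design of $\tilde{X}$ and $\tilde{s}$ (reproducing the Airy equation), and let the artificial $-n^{-7/6}$ term in $\tilde{s}_n$ absorb the remainder. So the route is right.

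However, there is a genuine gap in the step where you argue that the $n^{-7/6}$ term saves the day. You claim $\tilde{X}_{n,m}>0$ on the whole range because ``$m^2/n$ is controlled,'' but for $m<n^{2/3-\varepsilon}$ one only gets $m^2/n<n^{1/3-2\varepsilon}$, which \emph{grows} with $n$ (for $\varepsilon<1/6$). The polynomial prefactor $1-\frac{2d-1}{3(d+1)}\frac{m^2}{n}-\cdots$ therefore changes sign already around $m\asymp\sqrt{n}$, and in the regime $\sqrt{n}\lesssim m<n^{2/3-\varepsilon}$ the quantity $\tilde{X}_{n,m}$ is negative (a large negative prefactor times a tiny positive Airy value). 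In that regime the contribution $+n^{-7/6}\tilde{X}_{n,m}$ to $\Delta_{n,m}$ is \emph{negative}, so your stated mechanism for dominating the error terms is simply unavailable there. The actual verification has to separate the coefficient of $\Ai$ from that of $\Ai'$ in the expansion of $\Delta_{n,m}$, use the sign information $\Ai(a_1+\bb^{1/3}\kappa)>0$ and $\Ai'(a_1+\bb^{1/3}\kappa)<0$ on the relevant ray, and check sign and dominance of the resulting polynomial factors in $\kappa$ regime by regime; this is what the computer-algebra part of the worksheet does and what you would need to supply by hand. (Two smaller slips: the Airy argument moves \emph{above} $a_1$, the largest zero, not below it --- positivity of $\Ai$ there is why it still works near $m=0$; and note that the constraint $m<n^{2/3-\varepsilon}$ is not merely for uniformity of remainders but is exactly what keeps the competing error terms, which carry powers of $\kappa$, below the $n^{-7/6}$ gain where that gain is positive.)
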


\begin{pro}
	\label{lem:AiryXUpper}
	Choose $\eta > \frac{(2d-1)^2}{18(d+1)^2}$ fixed and for all $n,m \geq 0$ let
	\begin{align*}
		\hat{X}_{n,m} &:= \left(1 - \frac{2d-1}{3(d+1)}\frac{m^2}{n} - \frac{3d^2+12-11}{6(d+1)}\frac{m}{n} + \eta\frac{m^4}{n^2}\right)\Ai\left(a_{1}+\frac{\bb^{1/3}(m+1)}{n^{1/3}}\right)~~~~~~~~\text{and}\\
		\hat{s}_n &:= 2 + \frac{a_1 \bb^{2/3}}{n^{2/3}} -  \frac{3d^2-5d+4}{3(d+1)n} + \frac{1}{n^{7/6}}.
	\end{align*}
	Then, for any $\varepsilon>0$, there exists a constant $\hat{n}_0$ such that
	\begin{align*}
		\hat{X}_{n,m}\hat{s}_{n} \geq \mu_{n,m}^{(d)} \hat{X}_{n-1,m+1} \!+\! \nu_{n,m}^{(d)} \hat{X}_{n-1,m-1}
	\end{align*}
	for all $n\geq \hat{n}_0$ and all $0 \leq m < n^{1-\varepsilon}$.
\end{pro}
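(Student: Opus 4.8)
The plan is to verify the displayed inequality directly, reading it as the statement that $\hat X$ is a \emph{supersolution} of the recurrence~\eqref{eq:recdnm}: showing $\hat{X}_{n,m}\hat{s}_n - \mu^{(d)}_{n,m}\hat{X}_{n-1,m+1} - \nu^{(d)}_{n,m}\hat{X}_{n-1,m-1} \geq 0$ for large $n$ is what later lets one bound $\dd_{n,m}$ from above by a product of the $\hat s_j$'s times $\hat X$. This mirrors Proposition~\ref{lem:AiryXLower}, the only differences being the reversed inequality, the wider range $0\le m<n^{1-\varepsilon}$, the opposite sign of the $n^{-7/6}$ term in $\hat s_n$, and the additional correction $\eta m^4/n^2$ in the polynomial prefactor. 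Write $z:=a_1+\bb^{1/3}(m+1)/n^{1/3}$ for the Airy argument appearing in $\hat X_{n,m}$ and let $P_{n,m}$ denote the polynomial prefactor, so that $\hat X_{n,m}=P_{n,m}\,\Ai(z)$.

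First I would expand the two shifted Airy arguments $z_{\pm}=a_1+\bb^{1/3}(m+1\pm 1)/(n-1)^{1/3}$ around $z$, using $(n-1)^{-1/3}=n^{-1/3}(1+\tfrac{1}{3n}+\cdots)$, and Taylor-expand $\Ai(z_{\pm})$ about $z$. Every derivative $\Ai^{(j)}(z)$ with $j\ge 2$ is then reduced, via the Airy differential equation $\Ai''(z)=z\,\Ai(z)$, to a combination of $\Ai(z)$ and $\Ai'(z)$ with polynomial-in-$z$ coefficients. After likewise Taylor-expanding the prefactors $P_{n-1,m\pm 1}$ and inserting the expansions of $\mu^{(d)}_{n,m}$ (a rational function of order $1+\LandauO(1/n)$) and $\nu^{(d)}_{n,m}$ (a product of $d-1$ factors, expanded as a polynomial in the small quantity $(m+i)/((d+1)(n+m))$, which stays small since $m=o(n)$), the whole difference collects into the shape $A_{n,m}\Ai(z)+B_{n,m}\Ai'(z)$ plus a controlled remainder, with $A_{n,m},B_{n,m}$ rational in $n,m$.

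The choices of $\hat s_n$ and of $P_{n,m}$ are tailored precisely so that the leading contributions cancel: the constant $2$ and the $n^{-2/3}$ order are absorbed, the term $a_1\bb^{2/3}/n^{2/3}$ encodes the Airy root through $c_1=a_1\bb^{1/3}$, and the $n^{-1}$ coefficient $-\tfrac{3d^2-5d+4}{3(d+1)}$ together with the $m/n$ and $m^2/n$ entries of $P_{n,m}$ kills the next order. What survives is (i) the strictly positive margin $P_{n,m}\Ai(z)/n^{7/6}$ coming from the $+n^{-7/6}$ term of $\hat s_n$, and (ii) a residual of smaller relative order (essentially $\LandauO(n^{-4/3})$ in the bulk). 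Since $n^{-7/6}$ sits strictly between the cancelled order $n^{-1}$ and the residual $n^{-4/3}$, the margin dominates the residual once $n\ge\hat n_0$, yielding the inequality for $m$ of the natural scale $m\asymp n^{1/3}$. The extra term $\eta m^4/n^2$ handles the large-$m$ regime: the threshold $\eta>\frac{(2d-1)^2}{18(d+1)^2}=\tfrac12\big(\tfrac{2d-1}{3(d+1)}\big)^2$ is exactly what is needed for the $m^4/n^2$ contribution fed in by the prefactor to dominate the $m^4/n^2$ term \emph{generated} by squaring the $m^2/n$ entry of $P_{n,m}$, so that the $m$-dependent part of the residual keeps the correct sign.

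The main obstacle I anticipate is the \emph{uniform} control over the full range $0\le m<n^{1-\varepsilon}$, where $m^2/n$ can be as large as $n^{1-2\varepsilon}$ and $P_{n,m}$ is far from $1$. Here one must exploit that the super-exponential decay of $\Ai(z)$ (its argument grows like $m/n^{1/3}$, eventually of order $n^{2/3-\varepsilon}$) together with the positivity enforced by $\eta m^4/n^2$ keeps every sign as required, and one must confirm that the terms discarded in the Taylor expansions are genuinely of lower order uniformly in $m$, not merely for fixed $m$. Because the resulting rational expressions are lengthy, I would track each order in $n^{-1/3}$ and each power of $m$ separately and verify the threshold condition on $\eta$ symbolically; this is the natural point to defer the explicit computation to the accompanying computer-algebra worksheet~\cite{Wa}, exactly as for the analogous estimates in~\cite{ElFaWa,FuYuZh}.
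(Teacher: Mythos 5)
Your proposal follows essentially the same route as the paper, which likewise verifies the supersolution inequality by Taylor-expanding the Airy arguments and the coefficients $\mu^{(d)}_{n,m}$, $\nu^{(d)}_{n,m}$, relying on the designed cancellations up to order $n^{-1}$, the $+n^{-7/6}$ margin, and the threshold on $\eta$, and then defers the lengthy symbolic verification to the accompanying Maple worksheet~\cite{Wa} in analogy with~\cite{FuYuZh,ElFaWa,ElFaWa2020}. Your identification of $\eta>\frac{1}{2}\bigl(\frac{2d-1}{3(d+1)}\bigr)^2$ as the condition for the $m^4/n^2$ term to dominate the contribution from squaring the $m^2/n$ coefficient matches the paper's choice.
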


\begin{proof}[Proof of Theorem~\ref{gen-tc-max-k}]
    Let us start with the lower bound. We first define a sequence $X_{n,m} : =\max\{\tilde{X}_{n,m},0\}$ which satisfies the inequality of Proposition~\ref{lem:AiryXLower} for \emph{all} $m \leq n$.
    Then, we define an explicit sequence $\tilde{h}_n := \tilde{s}_n \tilde{h}_{n-1}$ for $n >0$ and $\tilde{h}_0 = \tilde{s}_0$.
    From this, we show by induction that $\dd_{n,m} \geq C_0 \tilde{h}_{n} X_{n,m}$ for some constant $C_0>0$ and all $n \geq \tilde{n}_0$ and all $0 \leq m \leq n$. Hence,
   \begin{align*}
       \dd_{2n,0}
       &\geq C_0 \tilde{h}_{2n} X_{2n,0}\\
       &\geq C_0 \prod_{i=1}^{2n} \left(2+\frac{a_1 \bb^{2/3}}{i^{2/3}} - \frac{3d^2-5d+4}{3(d+1)i} - \frac{1}{i^{7/6}} \right) \Ai\left(a_{1}+\frac{\bb^{1/3}}{n^{1/3}}\right) \\
       &\geq C_1 (n!)^{d-1} 4^n e^{3 a_1 \bb^{2/3} n^{1/3}} n^{\frac{d^2+d-2}{2(d+1)}}.
   \end{align*}
   Finally, combining this with~\eqref{eq:thetachain} we get the lower bound.

   The upper bound is similar, yet more technical.
   The starting point is Proposition~\ref{lem:AiryXUpper} and a function $X_{n,m}$ that is valid for \emph{all} $0 \leq m \leq n$.
   For this purpose we define a sequence $\ddh_{n,m}$ such that $\ddh_{n,m} := \dd_{n,m}$ for $0 \leq m \leq n^{1-\varepsilon}$ and $\ddh_{n,m} := 0$ otherwise; compare with~\cite{ElFaWa2020,ElFaWa}.
   Then, using tools from lattice path theory and computer algebra, we show that $\dd_{2n,0} = \LandauO(\ddh_{2n,0})$ and that
   \begin{align*}
       \ddh_{2n,0}
       &\leq \hat{C}_1 (n!)^{d-1} 4^n e^{3 a_1 \bb^{2/3} n^{1/3}} n^{\frac{d^2+d-2}{2(d+1)}}.
   \end{align*}
  For more details, see the journal version of this work. 
\end{proof}

\section{Conjectures and Remarks}\label{con+rem}

The main purpose of this paper was to extend recent results on bicombining tree-child networks to $d$-combining tree-child networks. We did this for both one-component tree-child networks as well as general tree-child networks. For one-component $d$-combining tree-child networks, we proved an exact counting formula for their number with $n$ leaves and $k$ reticulation nodes. As a consequence of this formula, we obtained limit laws for the number of reticulation nodes of a random network and (asymptotic) counting results for their total number. For general $d$-combining tree-child networks, our knowledge of their counts is less complete. We derived (asymptotic) results for a fixed number of reticulation nodes and the maximal number of reticulation nodes. The latter implied also an (asymptotic) counting results for their total number.

How about limit laws for the number of reticulation nodes of general $d$-combining tree-child networks? We think that the upper bound (\ref{tc-number-ub}) is sharp for $d=2$ and far away from being sharp for $d\geq 3$. More precisely, we believe that the following conjecture holds.

\begin{conj}\label{ll-gen-tc}
Let $T_n^{(d)}$ be the number of reticulation nodes of a $d$-combining tree-child network picked uniformly at random from the set of all $d$-combining tree-child networks with $n$ leaves. Then, we have the following limit behavior of $T_n^{(d)}$.

\begin{itemize}
\item[(i)] For $d=2$ (bicombining case), we have the weak convergence result:
\[
n-1-T_n^{(2)}\stackrel{w}{\longrightarrow}\operatorname{Poisson}(1/2),
\]
where $\operatorname{Poisson}(\alpha)$ denotes the Poisson distribution.
\item[(ii)] For $d\geq 3$, the limit distribution of $n-1-T_n^{(d)}$ is degenerate.
\end{itemize}
\end{conj}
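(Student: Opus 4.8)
The plan is to reduce the entire statement to the asymptotics of the single ratio $\rho_{n,j}^{(d)} := \mathrm{TC}_{n,n-1-j}^{(d)}/\mathrm{TC}_{n,n-1}^{(d)}$ and then attack that ratio with the lattice-path/Airy machinery already developed for the maximally reticulated case. Note first that $\rho_{n,0}^{(d)}=1$ and that the iterated inequality \eqref{tc-number-ub} gives the uniform bound $\rho_{n,j}^{(d)}\le 1/(2^{j}j!)$. Since
\[
\mathbb{P}\!\left(n-1-T_n^{(d)}=j\right)=\frac{\rho_{n,j}^{(d)}}{\sum_{i\ge 0}\rho_{n,i}^{(d)}}
\]
and the dominating sequence $1/(2^{j}j!)$ is summable with sum $\sqrt{e}$, it suffices to establish a pointwise limit $\rho_{n,j}^{(d)}\to\rho_j^{(d)}$ for each fixed $j$. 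Dominated convergence then yields $\sum_i\rho_{n,i}^{(d)}\to\sum_i\rho_i^{(d)}$, and since the limiting mass function sums to one, Scheff\'e's lemma upgrades pointwise convergence to weak convergence. Concretely I would aim to prove
\[
\rho_j^{(2)}=\frac{1}{2^{j}j!}\quad(d=2),\qquad \rho_0^{(d)}=1\text{ and }\rho_j^{(d)}=0\ (j\ge 1)\quad(d\ge 3),
\]
which give $\operatorname{Poisson}(1/2)$ and the point mass at $0$, respectively. In this language the conjecture is exactly the assertion that the bound \eqref{tc-number-ub} is asymptotically sharp when $d=2$ and asymptotically lossy at every step when $d\ge 3$.

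By \eqref{tree-nodes}, a network with $n$ leaves and $n-1-j$ reticulation nodes has exactly $j$ free tree nodes, so the deficit $j$ is a structural defect count, and the injection behind \eqref{tc-number-ub} (which attaches the new reticulation through a rigid chain subdividing the root edge) is what must be shown to be asymptotically surjective. To count $\mathrm{TC}_{n,n-1-j}^{(d)}$ rather than merely bound it, I would extend the word encoding of Definition~\ref{words-tc} and the bijection of Lemma~\ref{lem:TC-cn-bij} from the maximal case to a defect-refined class $\mathcal{C}_{n,j}^{(d)}$, with $\mathcal{C}_{n,0}^{(d)}=\mathcal{C}_{n}^{(d)}$, in which the $j$ free tree nodes appear as marked defects. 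The goal is a two-parameter recurrence generalizing \eqref{eq:recbnm} that specializes to Lemma~\ref{lem:cn-bnm} at $j=0$ and is still compatible with the index transformation of Lemma~\ref{mod-rec}.

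Given such a recurrence, the asymptotics should follow by perturbing the Airy barrier functions $\tilde{X}_{n,m}$ and $\hat{X}_{n,m}$ of Propositions~\ref{lem:AiryXLower} and~\ref{lem:AiryXUpper}. I expect each defect to contribute a fixed rational factor in $n$, so that $\mathrm{TC}_{n,n-1-j}^{(d)}\sim C_j(d)\,n^{\,j\theta(d)}\,\mathrm{TC}_{n,n-1}^{(d)}$ for an exponent $\theta(d)$ and constants $C_j(d)$. The conjecture is then equivalent to $\theta(2)=0$ with $C_j(2)=1/(2^{j}j!)$, and $\theta(d)<0$ for $d\ge 3$. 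A negative exponent for $d\ge 3$ would be the general-network analogue of the factor $n^{(3-d)k}$ that already forces degeneracy in the one-component case (compare Corollary~\ref{Cor-1}(iii)); I would read off $\theta(d)$ and $C_j(d)$ by comparing the sub-leading terms of $\tilde{s}_n,\hat{s}_n$ and of the Airy prefactors between the defect-$j$ and defect-$0$ recurrences.

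The main obstacle is precisely the matching lower bound, which is what keeps the statement a conjecture. The upper bound $\rho_{n,j}^{(d)}\le 1/(2^{j}j!)$ is free, but proving $\rho_{n,j}^{(2)}\to 1/(2^{j}j!)$ requires showing that reticulation nodes whose parents are spread through the network --- i.e.\ those not produced by the rigid root-edge construction --- contribute only a vanishing fraction. This forces a genuine enumeration of non-maximal networks, whereas the clean bijection of Lemma~\ref{lem:TC-cn-bij} is available only at $k=n-1$; finding the defect-refined class $\mathcal{C}_{n,j}^{(d)}$ whose recurrence remains amenable to the Airy analysis is the technical heart of the problem. A secondary difficulty is uniformity: the estimates of Propositions~\ref{lem:AiryXLower} and~\ref{lem:AiryXUpper} are tailored to a window $m<n^{1-\varepsilon}$, and the defect analysis must control sub-leading corrections sharply enough to separate a constant factor ($d=2$) from a decaying power of $n$ ($d\ge 3$). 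This is exactly where the boundary case $d=3$ is most delicate, consistent with $d=3$ being the threshold at which the limit law changes character.
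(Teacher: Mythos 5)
This statement is a \emph{conjecture} in the paper: the authors offer no proof, only the remark that they have found a method whose details are ``currently checked'' and will appear in the journal version. So there is no paper proof to compare your attempt against, and your proposal should be judged on its own terms.

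Your reduction is sound as far as it goes: setting $\rho_{n,j}^{(d)}=\mathrm{TC}_{n,n-1-j}^{(d)}/\mathrm{TC}_{n,n-1}^{(d)}$, the iterated injection behind \eqref{tc-number-ub} does give $\rho_{n,j}^{(d)}\le 1/(2^jj!)$, the count of $j$ free tree nodes for deficit $j$ is consistent with the $2(n-k-1)$ free edges in the proof of Lemma~\ref{lem:TC-Theta-TCnn1}, and dominated convergence would indeed convert pointwise limits of the ratios into the claimed weak limits. Your reformulation --- that the conjecture says \eqref{tc-number-ub} is asymptotically sharp for $d=2$ and lossy for $d\ge 3$ --- is exactly how the authors themselves frame it in Section~\ref{con+rem}. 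But everything beyond the upper bound is a research programme, not an argument: the pointwise limits $\rho_j^{(2)}=1/(2^jj!)$ and $\rho_j^{(d)}=0$ ($j\ge 1$, $d\ge 3$) are precisely the content of the conjecture, and you assert them as targets without supplying the lower bounds. Moreover, the specific route you propose --- a defect-refined word class $\mathcal{C}_{n,j}^{(d)}$ extending Lemma~\ref{lem:TC-cn-bij} off the maximal stratum $k=n-1$ --- collides head-on with a known open problem: for $d=2$ this is essentially the Pons--Batle conjecture $\mathrm{TC}_{n,k}^{(2)}=\frac{n!}{(n-k)!}c_{n,k}^{(2)}$, which the paper states is unproven in general (established only for one-component networks), and the paper explicitly warns that ``the obvious generalization by replacing $2$ by $d$ does not work.'' You correctly identify where the difficulty lies, which is to your credit, but the proposal does not close the gap; it restates the conjecture in an equivalent form and sketches a plausible but unexecuted attack.
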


Moreover, the proof of this conjecture should also give the following result.

\begin{cor}
\begin{itemize}
\item[(i)] For $d=2$ (bicombining case), we have $\mathrm{TC}_n^{(2)}\sim\sqrt{e}\cdot\mathrm{TC}_{n,n-1}^{(2)}$.
\item[(ii)] For $d\geq 3$, we have $\mathrm{TC}_n^{(d)}\sim\mathrm{TC}_{n,n-1}^{(d)}$.
\end{itemize}
\end{cor}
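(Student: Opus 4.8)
The plan is to prove both the corollary and the accompanying Conjecture~\ref{ll-gen-tc} simultaneously by grouping networks according to the number $j:=n-1-k$ of reticulation nodes missing from the maximum. Writing
\[
\frac{\mathrm{TC}^{(d)}_n}{\mathrm{TC}^{(d)}_{n,n-1}}=\sum_{j=0}^{n-1}p^{(d)}_{n,j},\qquad p^{(d)}_{n,j}:=\frac{\mathrm{TC}^{(d)}_{n,n-1-j}}{\mathrm{TC}^{(d)}_{n,n-1}},
\]
part~(i) is the assertion that this sum tends to $\sqrt{e}$ and part~(ii) that it tends to $1$. The crucial structural input is the iterated construction behind~\eqref{tc-number-ub}, which, after replacing $k$ by $n-1-j$, reads $p^{(d)}_{n,j}\leq 1/(2^{j}j!)$ for all $n$ and $j$. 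Since $\sum_{j\geq 0}1/(2^{j}j!)=\sqrt{e}<\infty$, this already supplies a summable dominating sequence \emph{independent of $n$}, so the entire problem collapses to identifying the pointwise limits $\lim_{n\to\infty}p^{(d)}_{n,j}$ and then invoking Tannery's theorem (dominated convergence for series).

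The heart of the argument is therefore to establish
\[
\lim_{n\to\infty}p^{(d)}_{n,j}=q^{(d)}_j,\quad\text{where}\quad q^{(2)}_j=\frac{1}{2^{j}j!},\quad q^{(d)}_0=1,\quad q^{(d)}_j=0\ (j\ge1,\ d\ge3).
\]
To obtain these I would extend the exact word encoding of Lemma~\ref{lem:TC-cn-bij} from the maximally reticulated case to networks with $n-1-j$ reticulation nodes. For each fixed $j$ this should produce a modified class of words carrying $j$ ``defects'' together with a recurrence of the same shape as~\eqref{eq:recbnm}. Feeding this recurrence through the transformation of Lemma~\ref{mod-rec} and the Airy-type sub/supersolution bounds of Propositions~\ref{lem:AiryXLower} and~\ref{lem:AiryXUpper} should yield, for each fixed $j$, an asymptotic equivalent
\[
\mathrm{TC}^{(d)}_{n,n-1-j}\sim K^{(d)}_j\,(n!)^{d}\,\gamma(d)^n\,e^{3a_1\beta(d)n^{1/3}}\,n^{\alpha(d)+\delta_j(d)}
\]
with the same exponential and stretched-exponential factors as in Theorem~\ref{gen-tc-max-k} but a shifted polynomial exponent $\delta_j(d)$ and constant $K^{(d)}_j$. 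Then $p^{(d)}_{n,j}\sim (K^{(d)}_j/K^{(d)}_0)\,n^{\delta_j(d)}$, and the whole dichotomy is governed by the sign of $\delta_j(d)$: for $d=2$ one expects $\delta_j(2)=0$ with $K^{(2)}_j/K^{(2)}_0=1/(2^{j}j!)$, making the bound~\eqref{tc-number-ub} asymptotically sharp, whereas for $d\geq3$ the additional combinatorial freedom in attaching a $d$-parent reticulation near saturation should force $\delta_j(d)<0$ for every $j\ge1$, so that $p^{(d)}_{n,j}\to0$. This sign change is consistent with the $d$-dependence already visible in the exponent $(4-d)k$ of Theorem~\ref{gen-tc-fixed-k}.

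Granting the pointwise limits together with the uniform bound, Tannery's theorem yields $\sum_j p^{(d)}_{n,j}\to\sum_j q^{(d)}_j$, which equals $\sqrt{e}$ for $d=2$ and $1$ for $d\ge3$, giving parts~(i) and~(ii). At the same time, since $\mathbb{P}(n-1-T^{(d)}_n=j)=p^{(d)}_{n,j}/\sum_i p^{(d)}_{n,i}\to q^{(d)}_j/\sum_i q^{(d)}_i$, the limit law is $\operatorname{Poisson}(1/2)$ for $d=2$ and the point mass at $0$ for $d\ge3$, which is exactly Conjecture~\ref{ll-gen-tc}; this is why the two statements stand or fall together.

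The main obstacle is precisely the pointwise-limit step, i.e.\ pinning down $\delta_j(d)$ and the constants $K^{(d)}_j$. The difficulty is twofold. First, one must design the defect word classes so that their counting recurrences remain amenable to the Airy-kernel analysis: the $d$-dependent coefficients $\mu^{(d)}_{n,m},\nu^{(d)}_{n,m}$ of Lemma~\ref{mod-rec} already make the estimates in Propositions~\ref{lem:AiryXLower}--\ref{lem:AiryXUpper} delicate, and introducing defects perturbs both these coefficients and the boundary behaviour. Second, and more seriously, for $d=2$ one must prove the exact sharpness $\delta_j(2)=0$ and $K^{(2)}_j/K^{(2)}_0=1/(2^{j}j!)$ --- equivalently, that the construction underlying~\eqref{tc-number-ub} accounts for asymptotically all maximally reticulated networks. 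Upgrading Theorem~\ref{gen-tc-max-k} from a $\Theta$-estimate to the genuine asymptotic equivalences needed here (so that the ratios actually converge rather than merely staying bounded) is the technical crux, and is exactly the as-yet-unproven content of Conjecture~\ref{ll-gen-tc}, which is why the corollary is at present only conditional.
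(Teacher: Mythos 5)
First, note that the paper itself contains no proof of this corollary: it is stated as something that ``should'' follow from Conjecture~\ref{ll-gen-tc}, and the authors explicitly defer the argument (``details are currently checked'') to the journal version. So there is no paper proof to match your attempt against; the statement is, as written, conditional.

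Your reduction is sound as far as it goes, and is in the spirit of what any proof would have to do: the bound \eqref{tc-number-ub} gives $p^{(d)}_{n,j}=\mathrm{TC}^{(d)}_{n,n-1-j}/\mathrm{TC}^{(d)}_{n,n-1}\leq 1/(2^j j!)$ uniformly in $n$, which is a summable dominating sequence, so by Tannery's theorem everything reduces to the pointwise limits $\lim_n p^{(d)}_{n,j}$, and the identification of these limits with $1/(2^j j!)$ (for $d=2$) resp.\ $[j=0]$ (for $d\geq 3$) would simultaneously give the corollary and the conjectured limit laws. However, the genuine gap is exactly that pointwise-limit step, and nothing in your proposal closes it. The claims $\delta_j(2)=0$ with $K^{(2)}_j/K^{(2)}_0=1/(2^j j!)$, and $\delta_j(d)<0$ for $d\geq 3$ and $j\geq 1$, are asserted on heuristic grounds (``the construction underlying \eqref{tc-number-ub} should account for asymptotically all networks''), but this is precisely the content of Conjecture~\ref{ll-gen-tc}. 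Moreover, Theorem~\ref{gen-tc-max-k} is only a $\Theta$-estimate, so the constants $K^{(d)}_j$ you invoke are not even known to exist as genuine limits; establishing convergence of the ratios $p^{(d)}_{n,j}$ (rather than mere boundedness) requires either upgrading the Airy-type analysis to true asymptotic equivalence or finding a direct comparison between the ``defect'' word classes for consecutive $j$ --- neither of which is sketched in enough detail to be checkable. You are candid about this, which is to your credit, but it means the proposal is a correct conditional reduction rather than a proof; its status matches that of the statement in the paper.
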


\begin{rem}
Note that even with the above result, it is still not possible to give the first-order asymptotics of $\mathrm{TC}_n^{(d)}$ since the approach of \cite{ElFaWa} is only capable of giving a Theta-result.
\end{rem}

In fact, we recently found a method which should allow us to prove these results; details are currently checked. The proofs (if correct) will be presented in the journal version of this paper.

The above limit distribution result would clarify the behavior of the number of general $d$-combining tree-child networks for a large number, i.e., a number close to $n$, of reticulation nodes. So, how about the remaining range? (Recall that the number of networks for a small number of reticulation nodes is covered by Theorem~\ref{gen-tc-fixed-k}.)

In this regard, there is a recent interesting conjecture for the bicombining case; see \cite{PoBa}. To give details, denote by $\mathcal{C}_{n,k}^{(2)}$ a class of words which is similar defined as in Definition~\ref{words-tc} but with the difference that only $k$ letters appear $3$ times while the remaining $n-k$ letters appear $2$ times. Let $c_{n,k}^{(2)}$ be their number. Then, it was conjectured in \cite{PoBa}, together with some striking consequences, that
\[
\mathrm{TC}_{n,k}^{(2)}=\frac{n!}{(n-k)!}c_{n,k}^{(2)}.
\]
Can this be extended to $d$-combining networks? (The obvious generalization by replacing $2$ by $d$ does not work.) We do not know the answer to this question yet. However, we recently managed to define a modification of $\mathcal{C}_{n,k}^{(d)}$ which can be used to encode $d$-combining tree-child networks. This encoding seems to be useful for the proof of Conjecture~\ref{ll-gen-tc} and might also shed further light on \cite{PoBa}. Details will be again discussed in the journal version.

Finally, how about extension of our results to multicombining tree-child networks, i.e., tree-child networks where different reticulation nodes may have different number of parents? We think that many of the results of this extended abstract can be generalized to this case, however, notation becomes more cumbersome. We might also include a discussion on this in the journal version of the current paper.

{
\small

}

\begin{landscape}
\section*{Appendix}\label{app}

\vspace*{0.6cm}
\begin{table}[!htb]
\centering
\begin{tabular}{c|cccccccc} \hline
\multicolumn{1}{c|}{$n\backslash{}k$} & \multicolumn{1}{c}{0} & \multicolumn{1}{c}{1} & \multicolumn{1}{c}{2} & \multicolumn{1}{c}{3} & \multicolumn{1}{c}{4} & \multicolumn{1}{c}{5} & \multicolumn{1}{c}{6} & \multicolumn{1}{c}{7} \\ \hline
2 & 1 & 2 \\
3 & 3 & 21 & 42 \\
4 & 15 & 228 & 1272 & 2544 \\
5 & 105 & 2805 & 30300 & 154500 & 309000 \\
6 & 945 & 39330 & 696600 & 6494400 & 31534200 & 63068400 \\
7 & 10395 & 623385 & 16418430 & 241204950 & 2068516800 & 9737380800 & 19474761600 \\
8 & 135135 & 11055240 & 405755280 & 8609378400 & 113376463200 & 920900131200 & 4242782275200 & 8485564550400 \\
 \hline
\end{tabular}
\caption{\label{tab:TCN_d2}$\mathrm{TC}^{(2)}_{n,k}$ for $2\leq n\leq 8$ and $0 \leq k < n$; see also \cite{CaZh}.}
\end{table}

\vspace*{0.6cm}
\begin{table}[!htb]
\centering
\begin{tabular}{c|ccccccc} \hline
\multicolumn{1}{c|}{$n\backslash{}k$} & \multicolumn{1}{c}{0} & \multicolumn{1}{c}{1} & \multicolumn{1}{c}{2} & \multicolumn{1}{c}{3} & \multicolumn{1}{c}{4} & \multicolumn{1}{c}{5} & \multicolumn{1}{c}{6} \\ \hline
2 & 1 & 2 \\
3 & 3 & 33 & 150 \\
4 & 15 & 492 & 7908 & 55320 \\
5 & 105 & 7725 & 291420 & 6179940 & 57939000 \\
6 & 945 & 132030 & 9603270 & 430105320 & 11292075000 & 132120450000 \\
7 & 10395 & 2471805 & 307525050 & 24586633890 & 1284266876760 & 40079165452200 & 560319972030000 \\ \hline
\end{tabular}
\caption{\label{tab:TCN_d3}$\mathrm{TC}^{(3)}_{n,k}$ for $2\leq n\leq7$ and $0 \leq k < n$.}
\end{table}

\begin{table}[!htb]
\centering
\begin{tabular}{c|cccccc} \hline
\multicolumn{1}{c|}{$n\backslash{}k$} & \multicolumn{1}{c}{0} & \multicolumn{1}{c}{1} & \multicolumn{1}{c}{2} & \multicolumn{1}{c}{3} & \multicolumn{1}{c}{4} & \multicolumn{1}{c}{5} \\ \hline
2 & 1 & 2 \\
3 & 3 & 48 & 546 \\
4 & 15 & 942 & 45132 & 1243704 \\
5 & 105 & 18375 & 2394360 & 227116260 & 11351644920 \\
6 & 945 & 375705 & 107314200 & 23919407460 & 3724353682560 & 291451508298720 \\ \hline
\end{tabular}
\caption{\label{tab:TCN_d4}$\mathrm{TC}^{(4)}_{n,k}$ for $2\leq n\leq 6$ and $0 \leq k < n$.}
\end{table}

\begin{table}[!htb]
\centering
\begin{tabular}{c|ccccc} \hline
\multicolumn{1}{c|}{$n\backslash{}k$} & \multicolumn{1}{c}{0} & \multicolumn{1}{c}{1} & \multicolumn{1}{c}{2} & \multicolumn{1}{c}{3} & \multicolumn{1}{c}{4} \\ \hline
2 & 1 & 2 \\
3 & 3 & 66 & 2016 \\
4 & 15 & 1650 & 242496 & 28710864 \\
5 & 105 & 39135 & 17566470 & 7876446840 & 2307919133520 \\ \hline
\end{tabular}
\caption{\label{tab:TCN_d5}$\mathrm{TC}^{(5)}_{n,k}$ for $2\leq n\leq 5$ and $0 \leq k < n$.}
\end{table}

\begin{table}[!htb]
\centering
\begin{tabular}{c|ccccc} \hline
\multicolumn{1}{c|}{$n\backslash{}k$} & \multicolumn{1}{c}{0} & \multicolumn{1}{c}{1} & \multicolumn{1}{c}{2} & \multicolumn{1}{c}{3} & \multicolumn{1}{c}{4} \\ \hline
2 & 1 & 2 \\
3 & 3 & 87 & 7524 \\
4 & 15 & 2700 & 1246740 & 676431360 \\
5 & 105 & 76515 & 118491090 & 262058953860 & 483098464854720 \\ \hline
\end{tabular}
\caption{\label{tab:TCN_d6}$\mathrm{TC}^{(6)}_{n,k}$ fo $r2\leq n\leq 5$ and  $0 \leq k < n$.}
\end{table}
\end{landscape}


\begin{thebibliography}{99}
\bibitem{CaZh} G. Cardona and L. Zhang (2020). Counting and enumerating tree-child networks and their subclasses, {\it J. Comput. System Sci.}, {\bf 114}, 84--104.
\bibitem{ElFaWa2020} A. Elvey Price, W. Fang, M. Wallner (2020). Asymptotics of minimal deterministic finite automata recognizing a finite binary language, {\it LIPIcs}, AofA 2020, Vol. 159, 11:1--11:13.
\bibitem{ElFaWa} A. Elvey Price, W. Fang, M. Wallner (2021). Compacted binary trees admit a stretched exponential, {\it J. Comb. Theory Ser. A}, {\bf 177}, Article 105306.
\bibitem{FuGiMa1} M. Fuchs, B. Gittenberger, M. Mansouri (2019). Counting phylogenetic networks with few reticulation vertices: tree-child and normal networks, {\it Australas. J. Combin.},{\bf 73:2}, 385--423.
\bibitem{FuGiMa2} M. Fuchs, B. Gittenberger, M. Mansouri (2021). Counting phylogenetic networks with few reticulation vertices: exact enumeration and corrections, {\it Australas. J. Combin.}, {\bf 82:2}, 257--282.
\bibitem{FuHuYu} M. Fuchs, E.-Y. Huang, G.-R. Yu. Counting phylogenetic networks with few reticulation vertices: a second approach, arXiv:2104.07842.
\bibitem{FuLiYu} M. Fuchs, H. Liu, G.-R. Yu. A short note on the exact counting of tree-child networks, arXiv:2110.03842.
\bibitem{FuYuZh} M. Fuchs, G.-R. Yu, L. Zhang (2021). On the asymptotic growth of the number of tree-child networks, {\it European J. Combin.}, {\bf 93}, 103278, 20 pp.
\bibitem{HuRuSc} D. H. Huson, R. Rupp, C. Scornavacca. {\it Phylogenetic Networks: Concepts, Algorithms and Applications}, Cambridge University Press, 1st edition, 2010.
\bibitem{DiSeWe} C. McDiarmid, C. Semple, D. Welsh (2015). Counting phylogenetic networks, {\it Ann. Comb.}, {\bf 19:1},
205--224.
\bibitem{PoBa} M. Pons and J. Batle (2021). Combinatorial characterization of a certain class of words and a
conjectured connection with general subclasses of phylogenetic tree-child networks, {\bf 11}, Article number: 21875.
\bibitem{St} M. Steel. {\it Phylogeny—Discrete and Random Processes in Evolution}, CBMS-NSF Regional Conference Series in Applied Mathematics, 89, Society for Industrial and Applied Mathematics (SIAM),
Philadelphia, PA, 2016.
\bibitem{Wa} M. Wallner, \url{http://dmg.tuwien.ac.at/mwallner}, 2021.
\end{thebibliography}
\end{document}